\newcommand{\lmin}{\lambda_{\min}}
\newcommand{\lmax}{\lambda_{\max}}
\newcommand{\spec}{\mathrm{spec}}
\newcommand{\mathC}{\mathbb{C}}
\newcommand{\mathR}{\mathbb{R}}
\def\cvd{~\vbox{\hrule\hbox{%
   \vrule height1.3ex\hskip0.8ex\vrule}\hrule } }
\title{Asynchronous Richardson iterations:\\Theory and practice\thanks{This version dated \today.
}}
\author{Edmond Chow\thanks{Georgia Institute of Technology, Atlanta, GA,
USA (\email{echow@cc.gatech.edu}).}
\and
Andreas Frommer\thanks{Bergische Universit\"{a}t Wuppertal,
Wuppertal, Germany (\email{frommer@math.uni-wuppertal.de}).}
\and
Daniel B. Szyld\thanks{Temple University, Philadelphia, PA, USA
(\email{szyld@temple.edu}).}
}
\begin{document}
\maketitle

\begin{abstract}
We consider asynchronous versions of the first and second order Richardson
methods for solving linear systems of equations.
These methods depend on parameters whose values are chosen {\em a priori}.  
We explore the parameter values that can be proven to give convergence
of the asynchronous methods.  This is the first such analysis for
asynchronous second order methods.  We find that for the first order
method, the optimal parameter value for the synchronous case also gives an
asynchronously convergent method. For the second order method, the parameter
ranges for which we can prove asynchronous convergence do not contain
the optimal parameter values for the synchronous iteration.  In practice,
however, the asynchronous second order iterations may still converge
using the optimal parameter values, 
or parameter values close to the optimal ones,
despite this result.  We explore this behavior with a multithreaded
parallel implementation of the asynchronous methods.
\end{abstract}

\begin{keywords}
Asynchronous iterations. Parallel computing. Second order Richardson method.
\end{keywords}

\begin{AMS} 65F10, 65N22, 15A06
\end{AMS}

\section{Introduction}
\label{intro:sec}

A parallel asynchronous iterative method for solving a system of equations
is a fixed-point iteration in which processors do not synchronize at the
end of each iteration.  Instead, processors proceed iterating with the
latest data that is available from other processors.  Running an iterative
method in such an asynchronous fashion may reduce solution time when
there is an imbalance of the effective load between the processors because
fast processors do not need to wait for slow processors.  Solution time
may also be reduced when interprocessor communication costs are high
because computation continues while communication takes place.
However, the convergence properties of a synchronous iterative method are
changed when running the method asynchronously.

Consider the system of equations $x=G(x)$ {in fixed point form}, where $G: \mathR^n \to \mathR^n$,
which can be written componentwise
as $x_i = g_i(x)$, $i=1,\ldots,n$.  An asynchronous
iterative method for solving this system of equations can be
defined mathematically as the sequence of updates
\cite{baudet-1978,bertsekas-tsitsiklis-book,chazan-miranker},
\[
x_i^{k} =
\begin{cases}
x_i^{k-1} & \mbox{if\ } i \notin J_k \\
g_i(x_1^{s_1(k)}, 
    x_2^{s_2(k)}, \ldots, 
    x_n^{s_n(k)}) & \mbox{if\ } i \in J_k
\end{cases},
\]
where $x_i^{k}$ denotes component $i$ of the iterate %$x_i$ 
at time instant $k$, $J_k$ is the set of
indices updated at instant $k$, and $s_j(k) \le k-1$ is the 
last instant {component $j$} %$x_j$ 
was updated before being read when evaluating
$g_i$ at instant $k$.  We point out that (a) not all
updates are performed at the same time instant, and (b) updates may use stale
information, which models communication delays in reading or writing.

With some natural assumptions on the sequence of updates above, much work
has been done on showing the conditions under which asynchronous iterative
methods converge; see the survey \cite{Frommer.Szyld.00}.
For linear systems, where $G(x) = Tx +c$, $T \in \mathR^{n \times n}$, $c \in \mathR^n$,
the pioneering result from \cite{chazan-miranker} states that, under very mild conditions
on the sets $J_k$ and the sequences $s_j(k)$, any
asynchronous iteration
converges for any initial vector if and only if $\rho(|T|)<1$. Here,
$|T| \in \mathR^{n \times n}$ arises from $T$ by taking absolute values for each entry and $\rho$ denotes the spectral radius. 
The mild conditions on $J_k$ and $s_j(k)$ are that 
\begin{align} 
& \lim_{k \to\infty} s_j(k) = \infty \mbox{ for } j=1,\ldots,n \enspace \mbox{ and } \label{eq:asy_delays}\\
& \mbox{each } i \in \{1,\ldots,n\} \mbox{ appears infinitely many times in the sets $J_k$} \label{eq:asy_updates} .
\end{align}
Since $\rho(T) \le \rho(|T|)$, it
appears that the condition for convergence of asynchronous iterations
is more strict than that of synchronous iterations.

For linear systems $Ax = b$, asynchronous iterative methods that are based on the
Jacobi or block Jacobi splitting, i.e., $T = I-D^{-1}A$ with $D$ the diagonal or block diagonal of $A$,
have been extensively studied, although
these splittings generally give slow convergence; see  
\cite{bethune1,hook1,jordi-ipdps18,wolfsonpou-chow-jpdc-2018} for some recent references.  In this paper, we
consider first and second order Richardson methods \cite{Richardson.10}.
If information on the bounds of the spectrum of $A$ is available, this can be used to determine the parameter values to use
for the Richardson methods, and
the second order Richardson method, in particular, then converges rapidly.  This paper
explores the parameter values that can be proven to give convergence
of asynchronous Richardson methods. In particular, it presents the 
first such analysis for second order methods. % This is the first such analysis
%exploring the {\em convergence} of asynchronous second order methods.

Statements about the {\em rate of convergence}, however, cannot be made without
a description of the sets $J_k$ and {the sequences} $s_j(k)$.  Both %Such sets 
depend on
properties of the parallel computation, including how the problem is
partitioned among the processors, and computer characteristics such as
computation speed and
interprocessor communication latency and bandwidth.  Indeed, one can
imagine that in an asynchronous computation where communication is fast
and the workload is balanced, the asynchronous computation may behave very
much like the synchronous computation, while it may behave very differently if load is unbalanced or communication costs are high.
In this paper, we will therefore not go 
into the details of an analysis of the convergence rate, but we will demonstrate
the actual behavior of asynchronous first and second order Richardson
methods using a parallel multithreaded implementation of the methods.

Our theoretical and experimental
results are suggestive for an asynchronous version of the Chebyshev
semi-iterative method.  The Chebyshev method can be regarded as the
non-stationary counterpart of the stationary method which is the second
order Richardson method.  If one uses the optimal parameter values in
second order Richardson, i.e., the parameter values that minimize the
spectral radius of the iteration operator, then, asymptotically, both
second order Richardson and Chebychev iterations have the same convergence
rate \cite{Golub.Varga.61.partI}.  For a short historical description
of the development of these methods, see \cite{Saad.history.20}.
Unlike {those} Krylov subspace methods which rely on a variational principle,
the second order Richardson and Chebyshev
methods do not require inner products, which is what allows them to be easily executed 
asynchronously.

In recent related work, asynchronous versions of Schwarz and optimized
Schwarz methods have been developed \cite{Glusa.etal.20,Magoules:2017,yamazaki-2019}.

% We begin with a quote from \cite{Gutknecht.Roellin.02}:
% \begin{quote}
% Compared to Krylov space methods based on orthogonal or oblique projection, the 
% Chebyshev iteration does not require inner products and is therefore particularly suited for massively parallel computers with high communication cost.
% \end{quote}

\section{The setting}

From the beginning, we assume that the original system  
\[
\hat{A} x = \hat{b}, \enspace \hat{A} \in \mathC^{n \times n}, \enspace \hat{b} \in \mathC^n
\]
is preconditioned with a nonsingular matrix $M$,
that is, we have $\hat A =M-N$, $T= M^{-1}N$, $c=M^{-1}\hat b$, and
the original linear system is equivalent to
\begin{equation} \label{equiv.sys}
A x = c, \text{ where } A =  M^{-1}\hat{A} = I-T, \enspace c = M^{-1}\hat{b}.
\end{equation}

For the convergence results on asynchronous Richardson iterations to come, we will always assume that the following assumptions are met:
\begin{align}
 & \mbox{$T$ is non-negative, i.e.\ $T \geq 0$ where $\geq$ is to be understood entrywise,} \label{eq:T_nonneg}\\
 & \mbox{$T$ is convergent, i.e.\ $\rho(T) < 1$,} \label{eq:T_convergent}\\
 & \mbox{$\spec(A) \subset \mathR^+$.} \label{eq:spec_in_Rplus} 
\end{align}
In other words, we are assuming that $ \hat A=M-N$ is a convergent weak splitting in the sense of \cite{Marek.Szyld.90}\footnote{See also
\cite{wozni:94,cliper:98a}.}
with the additional property that the spectrum of $T$ is real. Note that if $\hat{A}$ is symmetric and 
positive definite (spd), and $M$ is the diagonal or a block diagonal of $\hat{A}$, which then is
also spd, \eqref{eq:spec_in_Rplus} is fulfilled. If, in addition, $\hat A$ is a Stieltjes matrix, i.e.\ an M-matrix which is spd, 
and if again $M$ is the diagonal or a block diagonal of $\hat A$, then \eqref{eq:T_nonneg} and \eqref{eq:T_convergent} are also fulfilled; see 
\cite[Chapter 5]{berman-plemmons},
\cite[Section 3.5]{varga-book}, \cite[Chapter 11]{young}.

%We assume that $A$ and $M$ are such that $T\geq 0$ and is convergent, i.e., that 
%\[
%\rho = \rho(T)< 1,
%\]
%and that the spectrum $\spec(A)$ is in $\mathR^+$. 
%That is, we are assuming that $ \hat A=M-N$ is a convergent weak splitting with the additional property that
%the spectrum of $T$ is real. 
%The case of 
%$\hat{A}$ being a symmetric M-matrix, and $ \hat A =M-N$ being a regular splitting with a symmetric positive definite matrix 
%$M$ \af{I think we need these additional assumption to have a real spectrum} is a particular case. 
%This includes of course the Jacobi and block Jacobi methods.
%In other words, 

With the splitting $A = I-T$, the standard, synchronous iterative method is as follows.  Given $x^0$, for $k=0,1,\ldots$, compute
\begin{equation} \label{simple.it}
x^{k+1} = T x^k + c .
\end{equation}

We note then that 
if we denote $\lmin$ and $\lmax$ to be the smallest and largest eigenvalue of $A$, we have
\[
\lmin = 1-\rho, \quad \lmax \leq 1+\rho.
\]

%We also assume that $T$ is irreducible, so that we have a positive Perron vector\linebreak
%$w >0$ with $T w = \rho w$. (If $T$ is reducible, we can consider small irreducible perturbations $T + \epsilon ee^*$ with $e = [1,\ldots,1]^*$ of $T$ and then go to the limit in the usual way, but we do not elaborate on this here.)
%\af{I think we should go without irreducibility so that we don't look too restrictive. I leave this for later.}

\section{First order Richardson}

The first order Richardson method 
consists of taking a linear combination of the previous
iterate with that which would come from the standard iteration (\ref{simple.it}).
This method can be seen as the simplest case of a semi-iterative method \cite{EN, ENV, varga-book, young}. The 
sum of the coefficients of the linear combination must add up to one, since otherwise the method will not produce 
iterates that converge towards $A^{-1}b$.\footnote{Gene Golub in his thesis
\cite{Golub.thesis} calls this a method of averaging, following the nomenclature used by von Neumann.}

We first consider the stationary case where the parameter $\alpha$ defining the Richardson iteration
is fixed for all iterations.
We consider later the non-stationary case where $\alpha = \alpha_k$ depends on the iteration number.
%This is a special case of a semi-iterative method, using just the current iterate $x_k$, and that obtained 
%by the standard iteration~(\ref{simple.it}).

The synchronous stationary iteration is
\begin{equation} \label{Richardson_1:eq}
x^{k+1} = (1 - \alpha) x^k + \alpha (T x^k +   c) =  x^k + \alpha [ c - (I-T) x^k] = x^k + \alpha r^k,
%x^{k+1} = x^k + \alpha r^k= x^k + \alpha(b-Ax^k) = (\underbrace{I-\alpha A}_{:= T_\alpha}))x^k + \alpha b.
\end{equation}
where $r^k = c - (I-T) x^k$ is the residual of the equivalent system (\ref{equiv.sys}).

The convergence analysis of this synchronous method is straight-forward and well-known; see \cite[Section 11.4]{young}.
The analysis consists of analyzing the spectral
radius of the iteration matrix 
\[
T_{\alpha} = (1- \alpha) I + \alpha T = I - \alpha(I-T)= I - \alpha A.
\]
If $\mu \in \spec(T_{\alpha})$, then $\mu = 1-\alpha + \alpha \lambda$, with $\lambda \in \spec(T)$,
i.e., $\lambda \in [-\rho, \rho]$.

\begin{theorem} Let $\spec(A) \subset \mathR^+$.  Then
\begin{itemize}
  \item[(i)] iteration \eqref{Richardson_1:eq} converges if $\alpha \in (0,\;2/\lmax)$,
  \item[(ii)] the optimal choice is $\alpha = 2/(\lmin+\lmax)$ in the sense that this choice minimizes $\rho(T_\alpha)$,
  \item[(iii)] the optimal choice w.r.t.\ the information $\spec(A) \subset [a,b]$, $a > 0$ is $\alpha = 2/(a+b)$. 
\end{itemize}
\end{theorem}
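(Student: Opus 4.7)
The plan is to reduce everything to a scalar min–max problem over the spectrum of $A$. Since $\spec(A) \subset \mathR^+$, the map $\lambda \mapsto 1 - \alpha \lambda$ sends $\spec(A)$ to $\spec(T_\alpha)$, so every eigenvalue of $T_\alpha$ is real and of the form $1 - \alpha \lambda$ with $\lambda \in [\lmin, \lmax]$. Hence
\[
\rho(T_\alpha) \;=\; \max_{\lambda \in \spec(A)} |1 - \alpha \lambda| \;=\; \max\bigl(|1-\alpha\lmin|,\,|1-\alpha\lmax|\bigr),
\]
because on an interval of positive numbers the function $\lambda \mapsto |1-\alpha\lambda|$ is piecewise linear and attains its maximum at an endpoint.

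For part (i), I would observe that $\rho(T_\alpha) < 1$ is equivalent to the two strict inequalities $|1-\alpha\lmin|<1$ and $|1-\alpha\lmax|<1$, i.e.\ $0<\alpha\lmin<2$ and $0<\alpha\lmax<2$. Since $\lmin \le \lmax$, the binding constraint is $0 < \alpha < 2/\lmax$, which yields the stated range.

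For part (ii), I would minimize $\phi(\alpha) := \max(|1-\alpha\lmin|,|1-\alpha\lmax|)$ over $\alpha \in (0, 2/\lmax)$. For small $\alpha$ both endpoint values are positive and $\phi(\alpha) = 1-\alpha\lmin$ is decreasing; for $\alpha$ near $2/\lmax$, $\phi(\alpha) = \alpha\lmax - 1$ is increasing. By continuity the minimum occurs at the unique $\alpha$ where the two branches coincide, i.e.\ where $1 - \alpha\lmin = -(1 - \alpha\lmax)$. Solving gives $\alpha = 2/(\lmin + \lmax)$, and the corresponding minimal value is $(\lmax-\lmin)/(\lmax+\lmin) < 1$.

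Part (iii) is the same argument in disguise: if we only know $\spec(A) \subset [a,b]$, then the best bound we can give on $\rho(T_\alpha)$ is $\max(|1-\alpha a|,|1-\alpha b|)$, and the analysis of (ii) applied to $[a,b]$ in place of $[\lmin,\lmax]$ gives the optimal $\alpha = 2/(a+b)$. There is no real obstacle here; the only mildly delicate point is justifying that the endpoint maximum really equals $\rho(T_\alpha)$, which follows from the monotonicity of $\lambda \mapsto |1-\alpha\lambda|$ on either side of $1/\alpha$.
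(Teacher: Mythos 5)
Your proposal is correct and follows essentially the same route as the paper: identify $\spec(T_\alpha)=\{1-\alpha\lambda:\lambda\in\spec(A)\}$, reduce to the scalar quantity $\max\bigl(|1-\alpha\lmin|,\,|1-\alpha\lmax|\bigr)$, and obtain (i) from the condition that both terms be below 1 and (ii), (iii) by equating the two branches. The added remarks on monotonicity of $\phi$ only spell out what the paper leaves implicit.
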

\begin{proof}
We have $\spec(T_\alpha) = \{ 1 - \alpha \lambda: \lambda \in \spec(A)\}$ and thus  
\[
\rho(T_\alpha) = \max \{|1-\alpha \lmin|, \; |1-\alpha \lmax|\}.
\]
From this we see that $\rho(T_\alpha) < 1 $ iff $\alpha \in (0,\; 2/\lmax)$, which is (i), and that $\rho(T_\alpha)$ is minimal
if $1-\alpha \lmin = -(1-\alpha \lmax)$ which gives (ii). Part (iii) follows from equating  $1-\alpha a$ with $-(1-\alpha b)$.
\end{proof}

Note that in our situation we know $\spec(A) \subset [1-\rho, \, 1+\rho]$, and, by (iii) the optimal $\alpha$ w.r.t.\ this information is $\alpha = 1$.

For the asynchronous iteration, we analyze when %it holds that 
$\rho(|T_\alpha|) < 1$.
We adopt the notation $w>0$ for $w \in \mathR^n$ if $w_i > 0$ for $i=1,\ldots,n$. Our analysis relies on the following often-used fact from 
non-negative matrix theory which we restate with its proof for convenience.
\begin{lemma} \label{non_negative:lem} Let $T \in \mathR^{n \times n}$, $T \geq 0$ with spectral radius $\rho$. Then for every $\varepsilon > 0$ there exists a positive vector $w_\varepsilon >0$, $w_\varepsilon \in \mathR^n$, such that 
\[
Tw_\varepsilon \leq (\rho+\varepsilon)w_\varepsilon.
\]
\end{lemma}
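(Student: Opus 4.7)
The plan is to reduce to the Perron--Frobenius theorem via a small strictly positive perturbation of $T$. The matrix $T$ itself may be reducible (even nilpotent), so a priori it need not admit a positive Perron vector; but any strictly positive matrix does, and $\rho$ depends continuously on the entries of the matrix, so a sufficiently small perturbation produces the desired $w_\varepsilon$.

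Concretely, for $\delta > 0$ I would set $T_\delta := T + \delta E$, where $E \in \mathR^{n\times n}$ is the all-ones matrix. Since $T \geq 0$, we have $T_\delta > 0$ entrywise, hence $T_\delta$ is irreducible (and in fact primitive). By the Perron--Frobenius theorem, there exists a strictly positive eigenvector $w_\delta > 0$ with $T_\delta w_\delta = \rho(T_\delta)\, w_\delta$. Because $\delta E \geq 0$ and $w_\delta > 0$, we immediately get
\[
T w_\delta \;=\; T_\delta w_\delta - \delta E w_\delta \;\leq\; T_\delta w_\delta \;=\; \rho(T_\delta)\, w_\delta.
\]

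The final step is to arrange $\rho(T_\delta) \leq \rho + \varepsilon$. The spectral radius is a continuous function of the matrix entries (the eigenvalues are continuous as a set, being the roots of the characteristic polynomial), so $\rho(T_\delta) \to \rho(T) = \rho$ as $\delta \to 0^+$. Thus I can pick $\delta = \delta(\varepsilon) > 0$ small enough that $\rho(T_\delta) \leq \rho + \varepsilon$, and set $w_\varepsilon := w_\delta$; this vector is positive and satisfies $T w_\varepsilon \leq (\rho + \varepsilon) w_\varepsilon$, as required.

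There is no real obstacle here beyond citing Perron--Frobenius for the strictly positive matrix $T_\delta$ and invoking continuity of the spectral radius; both are classical and can be referenced to, e.g., Berman--Plemmons or Varga. The argument is essentially the standard regularization trick used to extend Perron--Frobenius statements from irreducible to general non-negative matrices.
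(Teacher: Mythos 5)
Your proof is correct and follows essentially the same route as the paper: perturb to the strictly positive matrix $T+\delta E$, apply Perron--Frobenius to get a positive eigenvector, and use continuity of the spectral radius to choose $\delta$ small enough. The only cosmetic difference is that you discard the term $\delta E w_\delta$ entirely, while the paper uses $Ew_\delta \geq w_\delta$ to get the slightly sharper bound $Tw_\delta \leq (\rho(T_\delta)-\delta)w_\delta$ before invoking continuity; both variants are fine.
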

\begin{proof} For $\delta > 0$, let
\[
T_\delta = T + \delta E, \enspace \mbox{ where } E = \begin{bmatrix} 1 & \cdots & 1 \\ \vdots & \ddots & \vdots \\ 1 & \cdots & 1 \end{bmatrix}.
\]
Then $T_\delta$ has only positive entries, and by the Perron-Frobenius Theorem, 
see \cite{berman-plemmons, varga-book}, e.g., there exists $w_\delta > 0$ such that $T_\delta w_\delta
= \rho(T_\delta)w_\delta$ which, since $Ew_\delta \geq w_\delta$, gives
\begin{equation} \label{T_delta:eq}
Tw_\delta \leq (\rho(T_\delta)-\delta) w_\delta.
\end{equation}
By continuity of the spectral radius, we can choose $\delta = \delta(\varepsilon)$ such that $\rho(T_{\delta(\varepsilon)}) \leq \rho +\varepsilon$,
so that \eqref{T_delta:eq} becomes the assertion of the lemma (with $w_\varepsilon = w_{\delta(\varepsilon)}$).
\end{proof}

In Theorem~\ref{1st_order_asynchronous_convergence:thm} below, as well as in
Theorem~\ref{2nd_order_asynchronous_convergence:thm}, we will also use the fact that if, for $T \in \mathR^{n \times n}$, $T \geq 0$ and $w \in \mathR^n$, $w>0$, we have $Tw \leq \nu w$, then $\rho(T) \leq \nu$. This follows immediately from observing that $Tw \leq \nu w$ is equivalent to 
$\|T\|_w \leq \nu$ where $\| \cdot \|_w$ is the matrix norm induced by the weighted maximum norm $\|x\|_w = \max_{i=1}^n |x_i/w_i|$ on~$\mathR^n$.

\begin{theorem} \label{1st_order_asynchronous_convergence:thm} Assume that \eqref{eq:T_nonneg}, \eqref{eq:T_convergent} and \eqref{eq:spec_in_Rplus} hold 
and let $\rho = \rho(T)$. Then $\rho(|T_\alpha|) < 1$ if $\alpha \in (0,\tfrac{2}{1+\rho})$, where $\tfrac{2}{1+\rho} > 1$.
\end{theorem}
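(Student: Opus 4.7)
The plan is to control $|T_\alpha|$ by a simpler nonnegative matrix and then apply Lemma~\ref{non_negative:lem}. Since $T\geq 0$ and $\alpha>0$, the off-diagonal entries of $T_\alpha = (1-\alpha)I + \alpha T$ agree with those of $\alpha T \geq 0$, while the diagonal entries are of the form $(1-\alpha) + \alpha T_{ii}$. Applying the triangle inequality entry by entry yields the entrywise bound
\[
|T_\alpha| \;\leq\; |1-\alpha|\, I + \alpha T.
\]
This neatly separates the (possibly) sign-changing $(1-\alpha)I$ contribution from the genuinely nonnegative part $\alpha T$.

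Next, fix $\varepsilon > 0$ and invoke Lemma~\ref{non_negative:lem} to obtain $w_\varepsilon > 0$ with $T w_\varepsilon \leq (\rho+\varepsilon) w_\varepsilon$. Multiplying the entrywise bound on $|T_\alpha|$ by the positive vector $w_\varepsilon$ preserves the inequality and gives
\[
|T_\alpha|\, w_\varepsilon \;\leq\; \bigl(|1-\alpha| + \alpha(\rho+\varepsilon)\bigr)\, w_\varepsilon.
\]
By the weighted-max-norm observation stated just before Theorem~\ref{1st_order_asynchronous_convergence:thm}, this implies $\rho(|T_\alpha|) \leq |1-\alpha| + \alpha(\rho+\varepsilon)$. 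Letting $\varepsilon \downarrow 0$ yields the clean scalar bound $\rho(|T_\alpha|) \leq |1-\alpha| + \alpha\rho$.

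It remains to check when $|1-\alpha| + \alpha \rho < 1$. If $0 < \alpha \leq 1$, the quantity equals $1 - \alpha(1-\rho)$, which is strictly less than $1$ since $\rho < 1$. If $\alpha > 1$, it equals $\alpha(1+\rho) - 1$, which is $< 1$ exactly when $\alpha < 2/(1+\rho)$. Both sub-cases together cover precisely $\alpha \in \bigl(0,\tfrac{2}{1+\rho}\bigr)$. Finally, $\rho < 1$ gives $\tfrac{2}{1+\rho} > 1$, completing the statement.

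There is no real obstacle here; the only subtlety is handling the regime $\alpha > 1$, where $(1-\alpha)I$ contributes negative diagonal entries and forces one to pass through $|T_\alpha|$ rather than $T_\alpha$ itself. The entrywise triangle-inequality bound absorbs this sign change cleanly, after which Lemma~\ref{non_negative:lem} reduces the spectral-radius problem to a one-line scalar inequality.
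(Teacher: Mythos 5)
Your proof is correct and follows essentially the same route as the paper: the entrywise bound $|T_\alpha| \leq |1-\alpha| I + \alpha T$, the vector $w_\varepsilon$ from Lemma~\ref{non_negative:lem}, the weighted-max-norm spectral bound, and the same two sub-cases $0<\alpha\leq 1$ and $\alpha>1$. The only (cosmetic) difference is that you let $\varepsilon \downarrow 0$ to get the clean bound $\rho(|T_\alpha|) \leq |1-\alpha|+\alpha\rho$ before checking it is $<1$, whereas the paper fixes $\varepsilon$ sufficiently small in each case; both are valid.
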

\begin{proof}
Let $\varepsilon > 0$ and, by Lemma~\ref{non_negative:lem}, let $w_\varepsilon > 0$ be a vector for which $Tw_\varepsilon \leq (\rho + \varepsilon)w_\varepsilon$. Then we have 
\[
| T_\alpha|w_\varepsilon 
~\leq~
 |1-\alpha|w_\varepsilon + \alpha Tw_\varepsilon \leq (|1-\alpha| + \alpha (\rho+\varepsilon))w_\varepsilon = \nu w_\varepsilon \mbox{ with } \nu = |1-\alpha| + \alpha (\rho + \varepsilon).
\]
For $0 < \alpha \leq 1$ we have $0 \leq \nu = (1-\alpha)+(\rho + \varepsilon) \alpha = 1-\alpha(1-(\rho+\varepsilon))$ which is less than 1 if $\varepsilon> 0$ is taken small enough. For $1 < \alpha < \tfrac{2}{1+\rho}$ 
we have $0 < \nu = (\alpha-1) + (\rho+\varepsilon)\alpha = (1+\rho+\varepsilon)\alpha - 1$ which, for $\alpha$ fixed, is again less than 1 for $\varepsilon$ sufficiently small.
\end{proof}

We note that $\alpha = 1$, the optimal parameter value for the synchronous iteration w.r.t\ the information
%one obtains by assuming that 
$\spec(A) \subseteq [1-\rho,1+\rho]$, is covered by this theorem.
%Note:
%\begin{itemize}
% \item[a)] This is related to the result for the convergence of asynchronous and synchronous relaxed Jacobi for M-matrices.
% \item[b)] $\alpha = 1$, the optimal parameter w.r.t.\ the information $\spec(A) \in [1-\rho,1+\rho]$ is covered by this theorem.
% \end{itemize}

We discuss now the case in which $\alpha = \alpha_k$, i.e., the case, where the first order Richardson parameter
changes from one iteration to the next. 
As long as 
\linebreak
$0 < \alpha_k \leq \overline{\alpha} < \tfrac{2}{1+\rho}$, the ``non-stationary'' asynchronous method
converges as well, using \cite[Corollary 3.2]{Frommer.Szyld.00}. 
In fact, using the latter result, we have the following 
\linebreak
theorem.

%{We should restate the theorem exactly. I guess that we actually need $\alpha_k \leq \alpha < \tfrac{2}{1+\rho}$, i.e.\ the $\alpha_k$ must be bounded away from $\tfrac{2}{1+\rho}$. So here is how I think we should state that theorem}.
\begin{theorem}
Let $T_k: \mathC^n \to \mathC^n$, $k \in \mathbb{N}$  be a pool of linear operators sharing the same fixed point $x^* =A^{-1}b$ and 
being all contractive w.r.t.\ this fixed point in the same weighted max-norm, i.e., $\| T_k - x^*\|_w \leq \gamma_k \|x-x^*\|$ 
for all $x \in \mathC^n$. 
If $0 \leq \gamma_k \leq \gamma < 1$ for some 
$\gamma \in [0,1)$,  
then the asynchronous iterations which at each step picks one of the operators form the pool as 
its iteration operator, produces iterates which converge to $x^*$.  
\end{theorem}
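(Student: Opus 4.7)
The plan is to reduce the theorem to the standard nested-box framework for asynchronous convergence, which is exactly the content of \cite[Corollary 3.2]{Frommer.Szyld.00} cited just before the statement. The crucial observation is that a ball in a weighted maximum norm $\|\cdot\|_w$ is actually a \emph{box}, i.e., a Cartesian product of real intervals in each coordinate:
\[
\{x \in \mathC^n : \|x-x^*\|_w \leq \delta\} = \prod_{i=1}^n \{z \in \mathC : |z - x^*_i| \leq \delta w_i\}.
\]
This Cartesian-product structure is what makes the weighted max-norm the correct setting for asynchronous iterations, because an asynchronous update applied componentwise cannot leave such a set once the whole iterate lies in it.

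With this in hand, I would define the nested sequence
\[
X_j = \{x \in \mathC^n : \|x-x^*\|_w \leq \gamma^j \|x^0 - x^*\|_w\}, \quad j=0,1,2,\ldots,
\]
and check the two conditions needed to invoke Corollary~3.2: first, that $X_{j+1} \subseteq X_j$ and $\bigcap_j X_j = \{x^*\}$, which is immediate from $\gamma < 1$; and second, that each operator in the pool maps $X_j$ into $X_{j+1}$. The latter is the one-line computation
\[
\|T_k x - x^*\|_w \leq \gamma_k \|x-x^*\|_w \leq \gamma \cdot \gamma^j \|x^0-x^*\|_w = \gamma^{j+1}\|x^0-x^*\|_w \quad \text{for } x \in X_j,
\]
using the uniform bound $\gamma_k \leq \gamma$. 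Thus every operator in the pool is admissible in the sense of Frommer--Szyld for the same sequence of nested boxes.

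The asynchronous convergence conclusion then follows by the usual inductive argument: under the standing assumptions \eqref{eq:asy_delays}--\eqref{eq:asy_updates}, once the iterate has entered $X_j$ coordinate-by-coordinate (which happens after finitely many steps because every component is updated infinitely often and delays go to infinity), any subsequent update by any $T_k$ keeps the iterate in $X_j$ and eventually pushes it into $X_{j+1}$. Since $\bigcap_j X_j = \{x^*\}$, the iterates converge to $x^*$.

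I do not expect any serious obstacle here; the theorem is essentially a direct restatement of the nested-box corollary in the special case where the contraction factors are uniformly bounded away from~$1$. The only point requiring a touch of care is to make the Cartesian-product interpretation of $X_j$ explicit so that asynchronous componentwise updates can be seen to preserve the invariant $x \in X_j$, and to note that the uniform bound $\gamma_k \leq \gamma < 1$ (as opposed to merely $\gamma_k < 1$) is what guarantees the geometric shrinkage $\gamma^j \to 0$ needed to force convergence to a single point.
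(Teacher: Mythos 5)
Your proposal is correct and matches the paper's own justification: the paper gives no separate proof but simply invokes \cite[Corollary 3.2]{Frommer.Szyld.00}, whose standard nested box-set argument is exactly what you reconstruct with the sets $X_j$ (weighted max-norm balls are Cartesian products, each pool operator maps $X_j$ into $X_{j+1}$, and the intersection is $\{x^*\}$). The only cosmetic slip is that in $\mathC^n$ the factors of these balls are discs rather than real intervals, which does not affect the argument.
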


The result for non-stationary first order Richardson follows by using the vectors $w_\varepsilon$ from Lemma~\ref{non_negative:lem} for $T$ and by observing that with $T_k = (1-\alpha_k)I + \alpha_k T$ we have 
\[
\|(1-\alpha_k)I + \alpha_k T \|_{w_\varepsilon} \leq |1-\alpha_k| + \alpha_k(\rho + \varepsilon)  \leq |1-\overline{\alpha}|+ \overline{\alpha} (\rho+\varepsilon).
\]
Taking $\varepsilon > 0$ such that $\rho + \varepsilon < 1$ and $(1+\rho+\epsilon) \overline{\alpha}-1 < 1$  
gives $|1-\overline{\alpha}|+ \overline{\alpha} (\rho+\varepsilon) < 1$.

\section{Second order Richardson}
The second order Richardson method is the semi-iterative method one obtains
%\af{via a linear combination of the iterates of the standard iteration (\ref{simple.it}) with the two previous iterates.}
when correcting $x^k$ with a linear combination of $(x^k - x^{k-1})$
and the residual at step $k$, rather than just the residual as used
in the standard iteration (\ref{simple.it}).
Equivalently, one can take $x^{k+1}$ to be a linear combination
of the first order Richardson iterate (\ref{Richardson_1:eq}) with just $x^{k-1}$, as follows,
\begin{eqnarray}
x^{k+1} &=& (1+\beta) [ (1 - \alpha) x^k + \alpha (T x^k +   c) ] - \beta x^{k-1 \nonumber } \\
&=& - \beta x^{k-1} + (1+ \beta) x^k + (1+\beta) \alpha [ - x^k + T x^k +   c)]   \nonumber \\
&=& x^k - \beta (x^{k-1} - x_k) + (1+\beta) \alpha [ c - (I-T) x^k)]  \nonumber \\
 &=& x^k + \beta(x^k-x^{k-1}) + (1+\beta) \alpha (c-Ax^k) \label{Richardson2:eq} \\
        & = & (1+\beta)(I-\alpha A)x^k -\beta x^{k-1} +(1+\beta)\alpha c, \; k=1,2,\ldots. \nonumber
\end{eqnarray}
In addition to $x^0$, it is now necessary to also prescribe $x^1$, and for this it is possible to use one step of 
(\ref{simple.it}) or one step of first order Richardson \cite{Golub.thesis}.

The results to come are more restrictive
than those for first order Richardson, since we can show 
the convergence of asynchronous second order 
Richardson only for parameter values which are quite far from the optimal ones for the synchronous iteration.

%The second order Richardson method reads
%\begin{eqnarray*}
%%x^1 =&=& (I-\alpha A)x^0 + \alpha b, \\
%x^{k+1} &=& x^k + \beta(x^k-x^{k-1}) + (1+\beta) \alpha (b-Ax^k) \\
        %& = & (1+\beta)(I-\alpha A)x^k -\beta x^{k-1} +(1+\beta)\alpha b, \; k=1,2,\ldots.
%\end{eqnarray*}

We can write the three-term recurrence in \eqref{Richardson2:eq} using a matrix of doubled size as follows, cf.\ \cite{Young.72}, 
\[
\begin{bmatrix} x^{k+1} \\ x^{k} \end{bmatrix}  = \underbrace{\begin{bmatrix} (1+\beta)(I - \alpha A) & -\beta I \\ I & 0 \end{bmatrix}}_{:=T_{\alpha,\beta}} 
     \begin{bmatrix} x^k \\ x^{k-1} \end{bmatrix} + \begin{bmatrix} (1+\beta)\alpha c \\ 0 \end{bmatrix}  \cdot
\]

For the synchronous iteration (\ref{Richardson2:eq}), two approaches 
have been used to analyze convergence.
For the first approach~\cite{Young.72}, we note that if $\lambda$ is an eigenvalue of $T_{\alpha,\beta}$ with eigenvector $\left[ \begin{smallmatrix} s \\ t \end{smallmatrix} \right]$,
then $s = \lambda t$ and $(1+\beta) [(I - \alpha A)] s - \beta t = \lambda s$, that is,
$ (1+\beta) (I - \alpha A) \lambda t - \beta t = \lambda^2 t$. Thus, assuming that $t \neq 0$, this implies that
$\det [ (1+\beta) (I - \alpha A)\lambda - \beta I - \lambda^2I ] =0$, so that for $\mu \in \spec(A)$, the eigenvalues of  $T_{\alpha,\beta}$
must satisfy the quadratic equation
\begin{equation} \label{poly_sync:eq}
\lambda^2 - (1+\beta) (1 - \alpha \mu)\lambda + \beta =0.
\end{equation}
Figure~\ref{fig:1} (first column) plots the spectral radius of 
$T_{\alpha,\beta}$ as a function of $\alpha$ and $\beta$ for three examples.

%{\color{red} One could try to use some clever method to infer the range of $\alpha$ and $\beta$ for which the
%maximum of the roots of these quadratics (for all $\mu \in \spec(A)$) is less than 1. I could not find anyone who has 
%done this. In fact I could not find any reference that shows this. Namely your point (i) in Theorem 3.1 of the previous
%version of this note. One would use of course that $\rho(A) \leq 1+ \rho(T)$. The only reference I could not check is
%%the book: D. K. Faddeev and V. N. Faddeeva, Computational Methods in Linear Algebra, Freeman, San Francisco, 1963. 
%Can you please check?}

Frankel \cite{Frankel.50} shows that the values of the parameters $\alpha$ and $\beta$ that minimize the maximum of the modulus 
of the solution of \eqref{poly_sync:eq}
are given by
$\alpha = 2/(a+b)$ and $\beta = \left( \frac {\sqrt{b}-\sqrt{a}} {\sqrt{b}+\sqrt{a}} \right)^2 := q^2$, for $A$ such that
$\spec(A) \subset [a,b]$ with $a > 0$. The resulting minimal value for
$\rho(T_{\alpha,\beta})$, the spectral radius of the iteration operator, is $q$.

For the second approach \cite{Golub.thesis,Golub.Varga.61.partI},
assuming one uses the above optimal parameters,
the recurrence of the polynomials defining (\ref{Richardson2:eq})
is used to bound the 2-norm of the error as
\begin{equation} \label{asympt:eq}
\| x^k - x^* \|_2 \leq \left[ q^k \left( 1+ k \frac{1-q^2}{1+q^2} \right) \right] \|x^0 -x^*\|_2,
\end{equation}
where $x^*$ is the solution of (\ref{equiv.sys}).
Here, it is assumed that the first iterate is $x^1 = x^0 +\alpha(b-Ax^0)$.

%\af{I have a problem with how the following summary was formulated before. The novel part of \eqref{asympt:eq} is that it gives bounds on the error using a specific norm (the 2-norm), which is very useful and it is better than results on just the asymptotic convergence rate, which is the spectral radius of the iteration matrix. The way the theorem was formulated mixed this in an incorrect way.}

In summary, 
the following is thus known for the synchronous iteration.
\begin{theorem} \label{2nd_order_synchronous_convergence:thm}  
Let $A$ be spd. Then
\begin{itemize}
%\item[(i)] $\rho(T_\alpha) < 1 $ for $\beta \in (?,?)$ and $\alpha \in (?,?)$. {\em See Golub-Varga 1962, maybe...}
\item[(i)] the optimal parameters w.r.t.\ the information $\spec(A) \subset [a,b]$ with $a > 0$ are $\alpha = 2/(a+b)$ and 
$\beta = \left( \tfrac{b-a}{a +b+2\sqrt{ab}}\right)^2 = \left( \frac{\sqrt{b}-\sqrt{a}} {\sqrt{b}+\sqrt{a}} \right)^2$, and the asymptotic convergence 
factor $\rho(T_{\alpha,\beta})$ is equal to $q =\frac {\sqrt{b}-\sqrt{a}} {\sqrt{b}+\sqrt{a}}$,
\item[(ii)] with these parameters and with $x^1 = x^0+\alpha(b-Ax^0)$, a bound for the 2-norm of the errors
is given in (\ref{asympt:eq}).
\end{itemize}
\end{theorem}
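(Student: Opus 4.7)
The plan is to prove both parts directly from the characteristic equation \eqref{poly_sync:eq}, leveraging the spd hypothesis on $A$. I would first tackle (i). For each fixed $\mu \in [a,b]$, the two roots $\lambda_\pm(\mu)$ of \eqref{poly_sync:eq} have product $\beta$, so I set $D(\mu) = (1+\beta)^2(1-\alpha\mu)^2 - 4\beta$ and observe that when $D(\mu) \leq 0$ the roots form a complex-conjugate pair of modulus exactly $\sqrt{\beta}$, whereas $D(\mu) > 0$ produces real roots, one of which strictly exceeds $\sqrt{\beta}$ in modulus. Consequently $\rho(T_{\alpha,\beta}) \geq \sqrt{\beta}$ with equality iff $D(\mu) \leq 0$ for every $\mu \in [a,b]$, so the optimization reduces to minimizing $\sqrt{\beta}$ subject to $|1-\alpha\mu| \leq 2\sqrt{\beta}/(1+\beta)$ on $[a,b]$.

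The left-hand side attains its maximum at one of the endpoints, and the equioscillation argument already used for first order Richardson yields $\alpha = 2/(a+b)$ with maximum $(b-a)/(a+b)$. Plugging this into the constraint at equality gives $(1+\beta)(b-a) = 2\sqrt{\beta}(a+b)$, which, read as a quadratic in $\sqrt{\beta}$, has the admissible solution $\sqrt{\beta} = (\sqrt{b}-\sqrt{a})/(\sqrt{b}+\sqrt{a}) = q$. Hence $\beta = q^2$ and $\rho(T_{\alpha,\beta}) = q$, completing (i).

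For (ii), I set $e^k = x^k - x^*$ and subtract the fixed-point identity from \eqref{Richardson2:eq} to obtain the homogeneous recurrence
\[
e^{k+1} = (1+\beta)(I - \alpha A)e^k - \beta e^{k-1}, \qquad e^1 = (I-\alpha A)e^0.
\]
Because $A$ is spd, expanding $e^0$ in an orthonormal eigenbasis of $A$ decouples this into scalar recurrences. If $p_k(\mu)$ denotes the polynomial defined by $p_0 = 1$, $p_1 = 1-\alpha\mu$, and the same three-term rule, then in each eigendirection the error is multiplied by $p_k(\mu)$, so $\|e^k\|_2 \leq \max_{\mu\in[a,b]}|p_k(\mu)|\cdot\|e^0\|_2$ and everything reduces to a uniform bound on $|p_k(\mu)|$ over $[a,b]$.

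The hard part is this sharp polynomial bound. Normalizing $q_k = \beta^{-k/2} p_k$ and using the optimal parameters from (i), the identity $(1+\beta)(1-\alpha\mu)/(2\sqrt{\beta}) \in [-1,1]$ for $\mu\in[a,b]$ lets me write $\cos\theta = (1+\beta)(1-\alpha\mu)/(2\sqrt{\beta})$, upon which the recurrence collapses to the Chebyshev form $q_{k+1} = 2\cos\theta\cdot q_k - q_{k-1}$. Solving with $q_0 = 1$ and $q_1 = 2\cos\theta/(1+\beta)$ yields
\[
q_k(\mu) = \cos(k\theta) + \frac{1-\beta}{1+\beta}\cdot\frac{\cos\theta\,\sin(k\theta)}{\sin\theta}.
\]
The elementary inequality $|\sin(k\theta)| \leq k\,|\sin\theta|$, supplemented by a limiting argument at $\theta \in \{0,\pi\}$ (which corresponds precisely to $\mu \in \{a,b\}$ and where the closed form must be read as a limit), then delivers $|q_k(\mu)|\leq 1 + k(1-q^2)/(1+q^2)$. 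Restoring the factor $\beta^{k/2} = q^k$ gives exactly \eqref{asympt:eq}. The main obstacle is handling these endpoint degeneracies cleanly; everything else is routine.
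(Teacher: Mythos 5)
The weak point is part (i), specifically the sentence ``the optimization reduces to minimizing $\sqrt{\beta}$ subject to $|1-\alpha\mu|\le 2\sqrt{\beta}/(1+\beta)$ on $[a,b]$.'' What you actually prove is $\rho(T_{\alpha,\beta})\ge\sqrt{\beta}$, with equality precisely in the complex-root regime --- and even that presupposes $\beta\ge 0$. This does not exclude a competitor with $\beta<q^2$ (in particular $\beta\le 0$, or any $(\alpha,\beta)$ for which the roots at the extreme point of $[a,b]$ are real): for such parameters the bound $\rho>\sqrt{\beta}$ is weaker than the needed $\rho\ge q$, so nothing in your argument shows that the minimum of your \emph{constrained} problem is also the unconstrained minimum. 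The standard way to close this is a monotonicity step at the worst point: put $m:=\max\{|1-\alpha a|,\,|1-\alpha b|\}\ge (b-a)/(a+b)$; if $m\ge 1$ every $\beta$ gives $\rho\ge 1$, and if $m<1$, then for $\beta>-1$ in the real-root regime the larger root modulus $R(\beta)=\frac12\bigl((1+\beta)m+\sqrt{(1+\beta)^2m^2-4\beta}\bigr)$ is strictly decreasing in $\beta$ (the required inequality $m\sqrt{(1+\beta)^2m^2-4\beta}<2-(1+\beta)m^2$ is, after squaring, equivalent to $m<1$), hence $R(\beta)\ge m/\bigl(1+\sqrt{1-m^2}\bigr)\ge (b-a)/\bigl(a+b+2\sqrt{ab}\bigr)=q$, the last inequality because $t\mapsto t/(1+\sqrt{1-t^2})$ is increasing; the remaining cases $\beta\le -1$ (the product of the roots has modulus $\ge 1$) and $\beta$ above the complex window ($\rho\ge\sqrt{\beta}>q$) are immediate. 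With this lemma (or Frankel's original case analysis \cite{Frankel.50}) inserted, your constrained computation --- equioscillation giving $\alpha=2/(a+b)$, then $(1+\beta)(b-a)=2\sqrt{\beta}(a+b)$, i.e.\ $\sqrt{\beta}=q$ --- is correct and completes (i).

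Part (ii), by contrast, is sound, and it is worth noting that the paper offers no proof of this theorem at all: it is stated as a summary of known results, with (i) attributed to Frankel and (ii) to \cite{Golub.thesis,Golub.Varga.61.partI}, after deriving \eqref{poly_sync:eq} from the companion form $T_{\alpha,\beta}$. Your derivation of \eqref{asympt:eq} is a faithful self-contained reconstruction of the cited argument: the error recurrence $e^{k+1}=(1+\beta)(I-\alpha A)e^k-\beta e^{k-1}$ with $e^1=(I-\alpha A)e^0$, diagonalization of the spd matrix $A$, the normalization $q_k=\beta^{-k/2}p_k$ and the substitution $\cos\theta=(1+\beta)(1-\alpha\mu)/(2\sqrt{\beta})$ (admissible because at the optimal parameters $\max_{\mu\in[a,b]}|1-\alpha\mu|=(b-a)/(a+b)=2\sqrt{\beta}/(1+\beta)$), followed by $|\sin k\theta|\le k|\sin\theta|$, give exactly the stated bound, and the limits at $\theta\in\{0,\pi\}$ (double-root case, $\mu\in\{a,b\}$) are indeed only a routine check.
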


For the asynchronous second order Richardson, the following theorem proves convergence for certain ranges for $\alpha$ and $\beta$.

\begin{theorem} \label{2nd_order_asynchronous_convergence:thm}
Assume that \eqref{eq:T_nonneg}, \eqref{eq:T_convergent} and \eqref{eq:spec_in_Rplus} are fulfilled and let $\rho = \rho(T)$. Then we
have $\rho(|T_{\alpha,\beta}|) < 1$, provided 
\begin{equation} \label{2nd_order_asynchronous_coditons:eq}
\alpha > 0 \text{ and }  |1+\beta|(|1-\alpha| + \alpha \rho)+|\beta| < 1.
\end{equation}
\end{theorem}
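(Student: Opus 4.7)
The plan is to mimic the strategy used for the first-order case (Theorem~\ref{1st_order_asynchronous_convergence:thm}), producing a positive weight vector $W \in \mathR^{2n}$ together with $\nu < 1$ satisfying $|T_{\alpha,\beta}|\, W \le \nu W$. Once such $W$ and $\nu$ are exhibited, the remark preceding Theorem~\ref{1st_order_asynchronous_convergence:thm} (interpreted in the weighted max-norm on $\mathR^{2n}$) immediately yields $\rho(|T_{\alpha,\beta}|) \le \nu < 1$.

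First I would bound $|T_{\alpha,\beta}|$ entrywise by a cleaner non-negative block matrix. Since $A = I - T$ with $T \ge 0$ and $\alpha > 0$, we have $I - \alpha A = (1-\alpha)I + \alpha T$, whose off-diagonal entries are $\alpha T_{ij} \ge 0$ and whose diagonal entries satisfy $|(1-\alpha) + \alpha T_{ii}| \le |1-\alpha| + \alpha T_{ii}$ by the triangle inequality. Together with $|-\beta I| = |\beta|I$, this gives the componentwise bound
\[
|T_{\alpha,\beta}| \;\le\; \hat{T} \;:=\; \begin{bmatrix} |1+\beta|\bigl(|1-\alpha|I + \alpha T\bigr) & |\beta|\,I \\ I & 0 \end{bmatrix}.
\]

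The heart of the proof is the choice of $W$, and this is where I expect the main subtlety. The naive ansatz $W = [w_\varepsilon;\, w_\varepsilon]$, using the vector $w_\varepsilon > 0$ supplied by Lemma~\ref{non_negative:lem}, fails: the second block of $\hat{T} W$ is exactly $w_\varepsilon$, so the effective bound on the second block cannot be better than $1$. The obstacle is the identity in the $(2,1)$ position of $T_{\alpha,\beta}$, which is intrinsic to rewriting the three-term recurrence as a first-order system. To circumvent it, I would take
\[
W = \begin{bmatrix} w_\varepsilon \\ \eta\, w_\varepsilon \end{bmatrix}, \qquad \eta > 1,
\]
so that the second block of $\hat T W$ remains $w_\varepsilon$ but is now compared against $\nu\eta w_\varepsilon$, which needs only $\nu \ge 1/\eta$. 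Using $Tw_\varepsilon \le (\rho+\varepsilon) w_\varepsilon$, the first block of $\hat{T} W$ is bounded above by $\bigl[\,|1+\beta|(|1-\alpha|+\alpha(\rho+\varepsilon)) + |\beta|\eta\,\bigr] w_\varepsilon$.

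Finally I would invoke continuity. Under the hypothesis \eqref{2nd_order_asynchronous_coditons:eq}, the number $|1+\beta|(|1-\alpha|+\alpha\rho) + |\beta|$ is strictly less than $1$, so by choosing $\varepsilon > 0$ small enough and $\eta > 1$ close enough to $1$, both $|1+\beta|(|1-\alpha|+\alpha(\rho+\varepsilon)) + |\beta|\eta$ and $1/\eta$ remain strictly below $1$. Taking $\nu$ to be the maximum of these two quantities delivers $\hat{T} W \le \nu W$ with $\nu < 1$ and $W > 0$, which combined with $|T_{\alpha,\beta}| \le \hat{T}$ gives the theorem. The routine calculations are elementary; the only real idea is the scaling $\eta$ in the second block, without which the argument breaks down at the $I$ in the $(2,1)$ position.
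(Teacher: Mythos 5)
Your proposal is correct and follows essentially the same route as the paper's proof: the same weighted vector $\bigl[\begin{smallmatrix} w_\varepsilon \\ \gamma w_\varepsilon \end{smallmatrix}\bigr]$ with a scaling factor greater than $1$ in the second block (your $\eta$ is the paper's $\gamma$), the same entrywise bound $|I-\alpha A| \le |1-\alpha|I + \alpha T$, and the same final continuity argument choosing $\varepsilon$ small and the scaling close to $1$. The only cosmetic difference is that you introduce the majorant $\hat T$ explicitly, whereas the paper performs the same estimate inline.
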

Before we prove the theorem, consider the choice $\alpha = 1$. For this choice, the theorem states that asynchronous iterations converge for
$ -1 \leq \beta < \tfrac{1-\rho}{1+\rho}$, as can be seen from considering the two cases $\beta \geq 0$ and $-1 < \beta < 0$ separately. 
If the information about the spectral interval is $\spec(A) \subset [1-\rho,\,1+\rho]$,
Theorem~\ref{2nd_order_synchronous_convergence:thm} gives that
the optimal $\alpha$ for the synchronous iteration
is $\alpha = 1$, and the corresponding optimal 
$\beta$ will be close to $1$ for $\rho$ close to 1. The range of $\beta$ for which Theorem~\ref{2nd_order_asynchronous_convergence:thm} guarantees convergence of the asynchronous iteration for $\alpha = 1$, however, has $1-\rho$ as an upper bound for $\beta$ according to \eqref{2nd_order_asynchronous_coditons:eq}, and this will be close to 0 if $\rho$ is close to 1.
\par\noindent
{\em Proof of Theorem~\ref{2nd_order_asynchronous_convergence:thm}.} %\begin{proof}
Let $\varepsilon > 0$ be small enough such that we still have
\[
 |1+\beta|(|1-\alpha| + \alpha (\rho+\varepsilon))+|\beta| < 1,
\]
and let $w_\varepsilon > 0$ be a vector with $Tw_\varepsilon \leq (\rho+\varepsilon)w_\varepsilon$ which exists by Lemma~\ref{non_negative:lem}.
Let $\gamma > 1$ and consider the vector $[\begin{smallmatrix} w_\varepsilon \\ \gamma w_\varepsilon \end{smallmatrix} ]$. 
Then, if $\alpha > 0$, we have
\begin{eqnarray*}
|T_{\alpha,\beta}| \begin{bmatrix} w_\varepsilon \\ \gamma w_\varepsilon \end{bmatrix} &=& \begin{bmatrix} |1+\beta| \cdot |I - \alpha A| & |\beta| I \\ I & 0 \end{bmatrix}
    \begin{bmatrix} w_\varepsilon \\ \gamma w_\varepsilon \end{bmatrix} \\
    &\leq& \begin{bmatrix} (|1+\beta| \cdot(|1-\alpha| + \alpha (\rho+\varepsilon)) + |\beta| \gamma)w_\varepsilon \\ w_\varepsilon \end{bmatrix}
    \,
    \leq \, \sigma_\varepsilon \begin{bmatrix} w_\varepsilon \\ \gamma w_\varepsilon \end{bmatrix},
\end{eqnarray*}
with
\begin{equation} \label{sigma:eq}
\sigma_\varepsilon = \max\{ \tfrac{1}{\gamma}, \; |1+\beta| \cdot (|1-\alpha| + \alpha (\rho+\varepsilon)) + |\beta| \gamma\}.
\end{equation}
Now, since $|1+\beta|(|1-\alpha| + \alpha (\rho+\varepsilon))+|\beta| < 1$, we can choose $\gamma > 1$ close enough to 1 such that we also have $|1+\beta|(|1-\alpha| + \alpha (\rho+\varepsilon))+\gamma |\beta| < 1$,
which gives $\sigma_\varepsilon < 1$
\linebreak  
in \eqref{sigma:eq}. 
\hspace*{4.5in} 
\cvd
%\end{proof}

We note that for $\beta < -1$, the inequality $|1+\beta|(|1-\alpha| + \alpha \rho)+|\beta| < 1$ cannot be fulfilled. 
Denoting $\nu := |1-\alpha| + \alpha \rho$ we can distinguish the two cases $0 \leq \nu < 1$ and $\nu \geq 1$. In the first case, we obtain
that  $|1+\beta|\nu+|\beta| < 1$ if $-1 \leq \beta < \tfrac{1-\nu}{1+\nu}$. In the second case, there is no $\beta$ which satisfies the inequality.

To compare with \eqref{poly_sync:eq}, let us study the eigenvalues of $|T_{\alpha,\beta}|$. 
We follow the same development as before for $T_{\alpha,\beta}$ and 
write:
$$
|T_{\alpha,\beta}| ~ \left[ \begin{array}{c} s \\ t \end{array} \right]  = 
\lambda  \left[ \begin{array}{c} s \\ t \end{array} \right] ~.
$$
Looking at the second block row of $|T_{\alpha,\beta}|$, we see that
$s = \lambda t$. Then, the first block row reads
$$
(|1+\beta| |I - \alpha A| \lambda + |\beta| I - \lambda^2 I) t =0.
$$
This means that 
$$
\det(|1+\beta| |I - \alpha A| \lambda + |\beta| I - \lambda^2 I) =0.
$$
For every eigenvalue $\mu = \mu_i$ of $|I - \alpha A|$ we thus have that $\lambda$ satisfies the quadratic equation
\begin{equation} \label{poly_async:eq}
\lambda^2 - |1+\beta|  \mu  \lambda - |\beta|  =0.
\end{equation}
%This would give the conditions on $\alpha$, $\beta$ we seek.
Figure~\ref{fig:1} (second column) plots the spectral radius of
$|T_{\alpha,\beta}|$ as a function of $\alpha$ and $\beta$ for three examples.

\section{Discussion}

\begin{figure}[p]
\includegraphics[width = 0.49\textwidth]{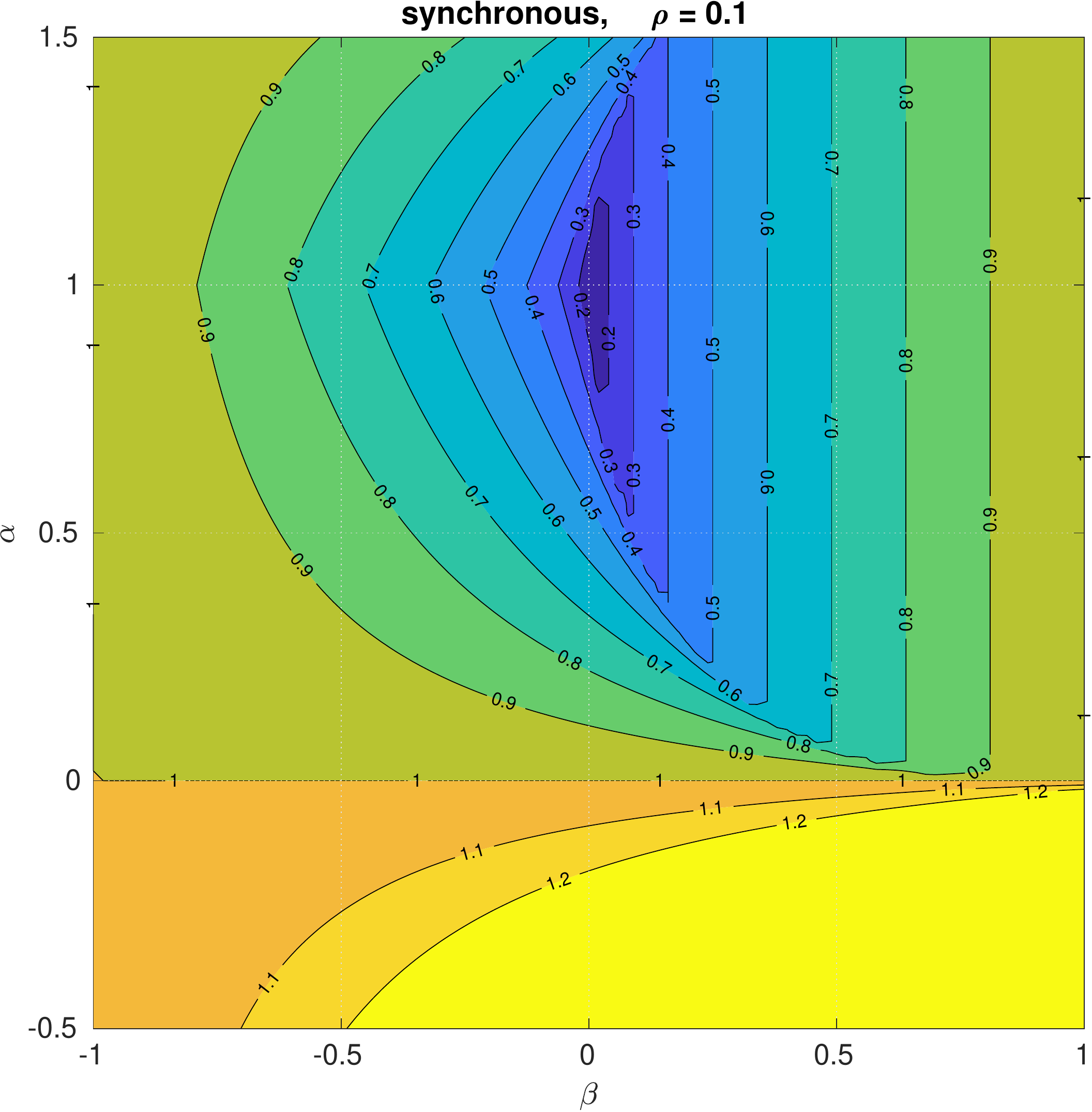} \hfill
\includegraphics[width = 0.49\textwidth]{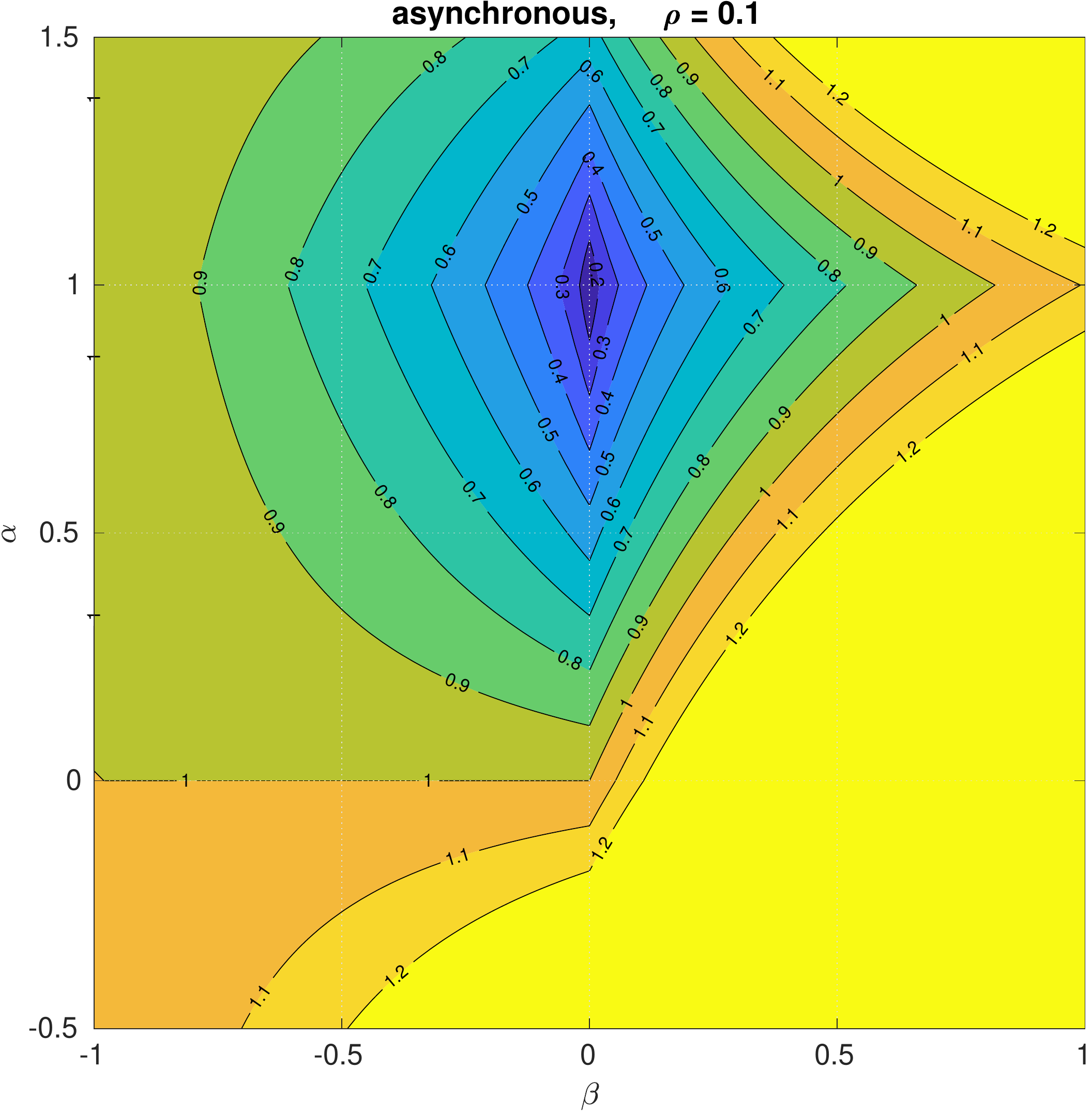} \\
\includegraphics[width = 0.49\textwidth]{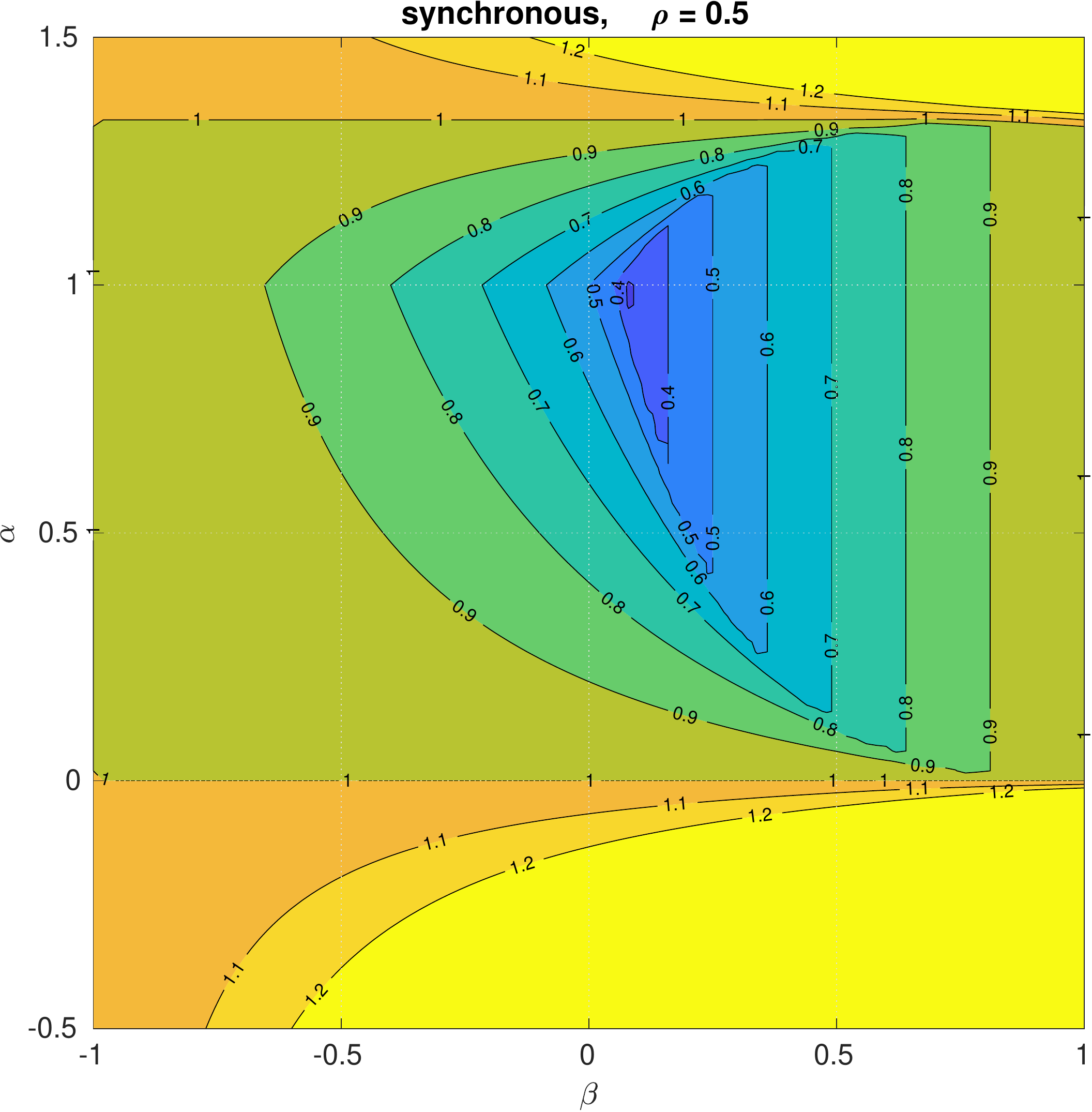} \hfill
\includegraphics[width = 0.49\textwidth]{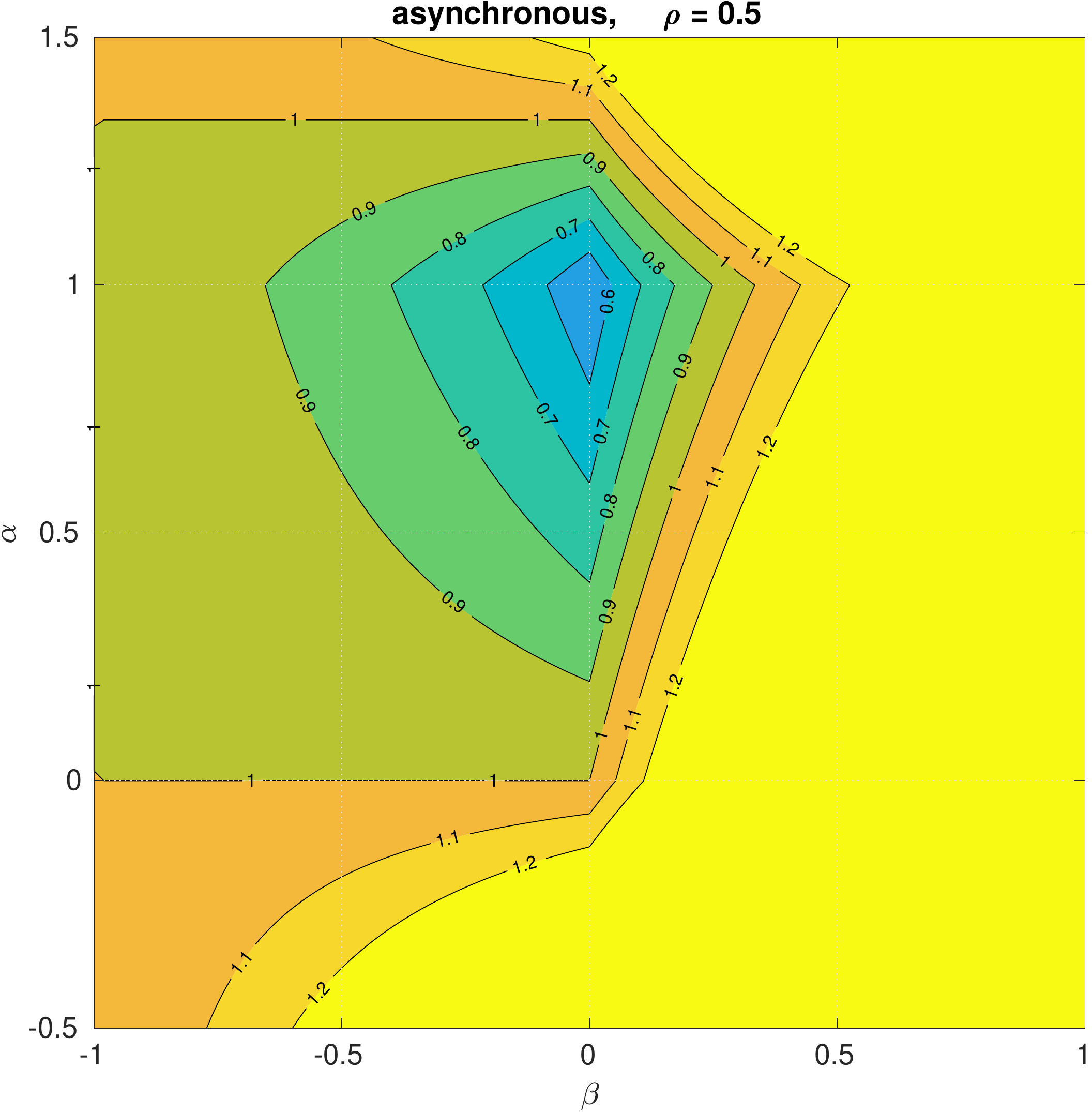} \\
\includegraphics[width = 0.49\textwidth]{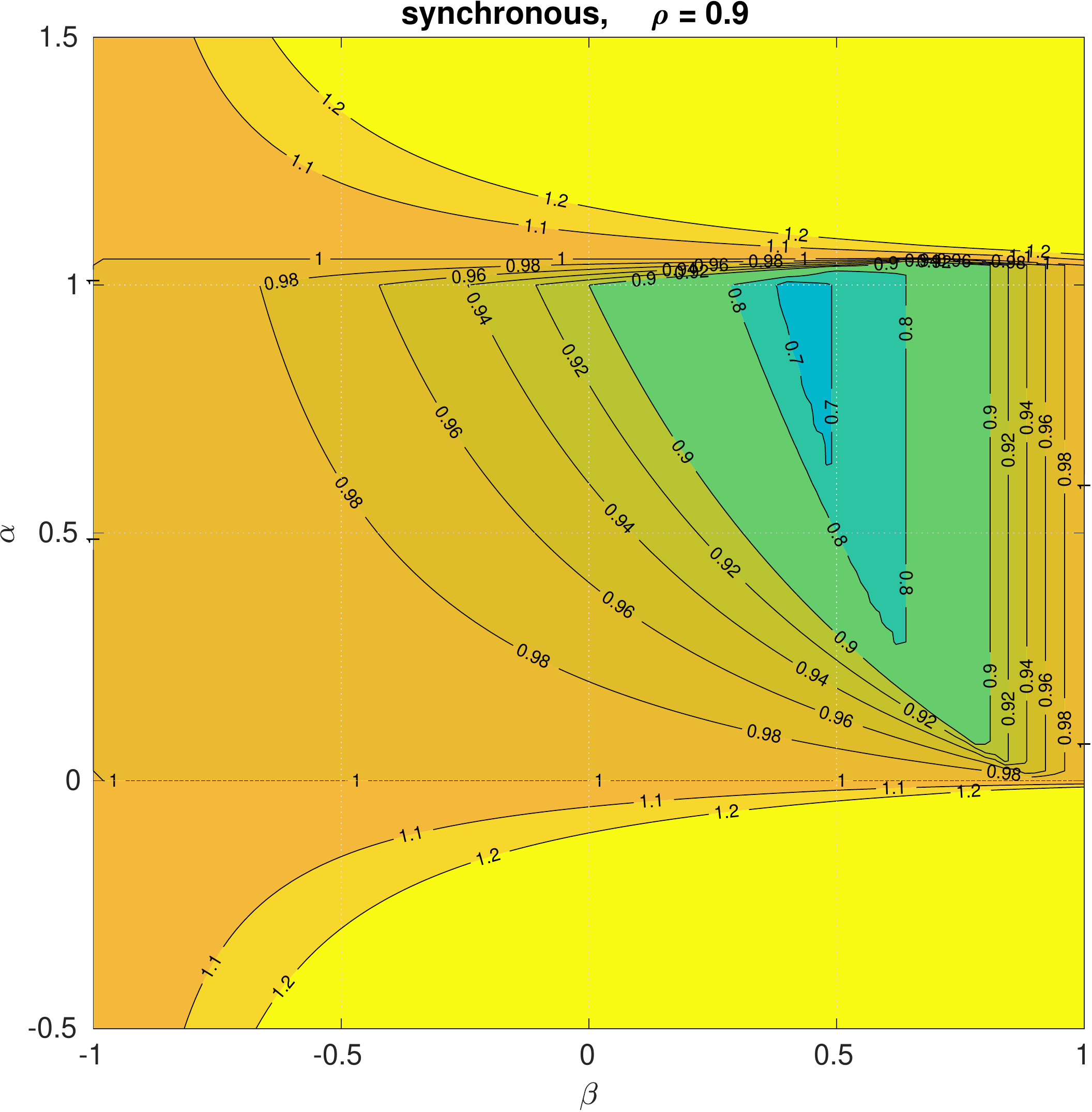} \hfill
\includegraphics[width = 0.49\textwidth]{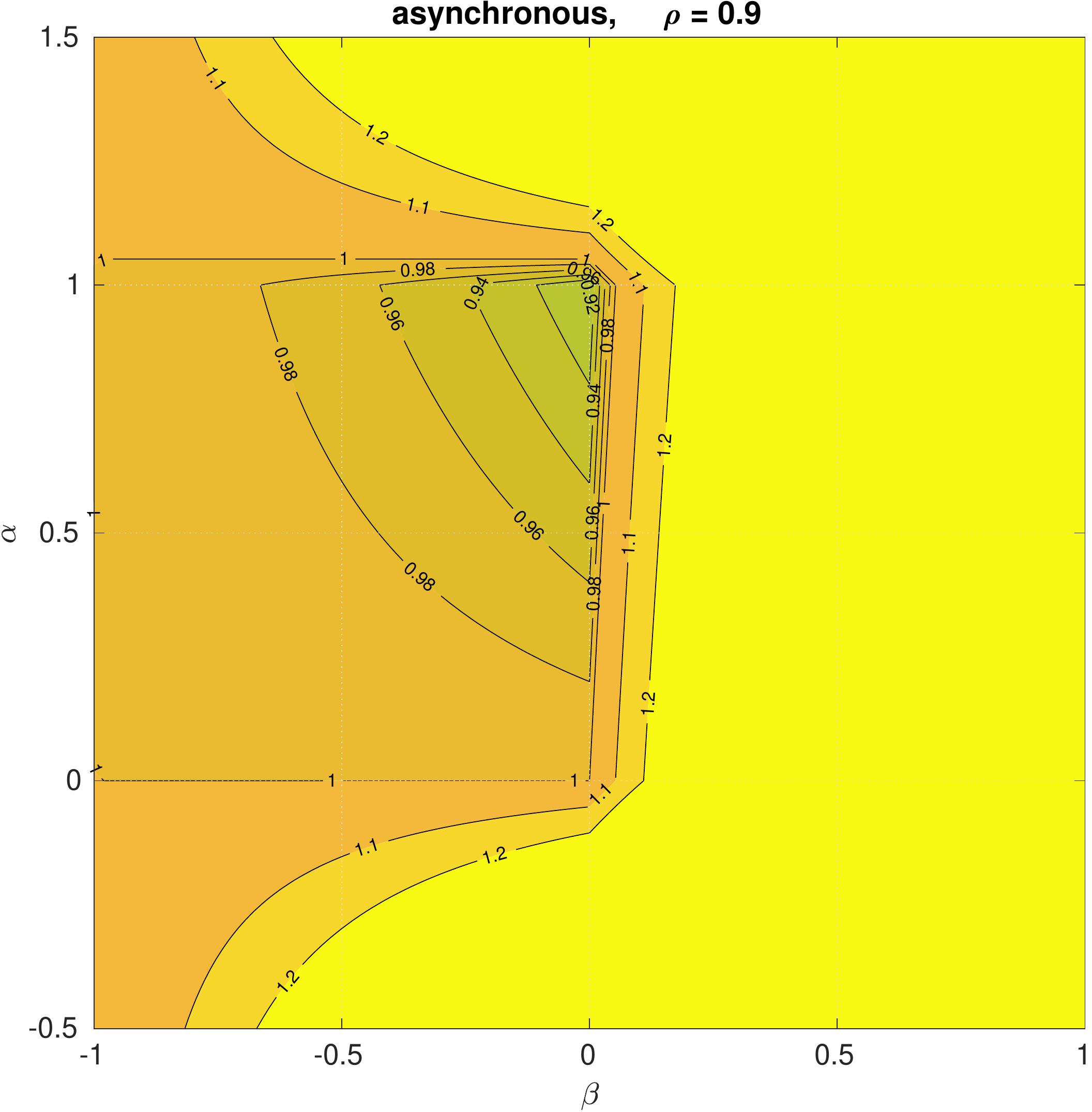}
\caption{\label{fig:1}
Spectral radius of $T_{\alpha,\beta}$ (synchronous case) and
of $|T_{\alpha,\beta}|$ (asynchronous case) 
as a function of $\alpha$ and $\beta$ when 
$\lambda_{\min}(A) = 1-\rho$ and $\lambda_{\max}(A) = 1+\rho$,
for three values of $\rho$.}
\end{figure}

For the second order Richardson method,
Figure~\ref{fig:1} plots the contours of the spectral radius of
$T_{\alpha,\beta}$ (synchronous case) and
of $|T_{\alpha,\beta}|$ (asynchronous case)
as a function of $\alpha$ and $\beta$ when
$\lambda_{\min}(A) = 1-\rho$ and $\lambda_{\max}(A) = 1+\rho$,
for $\rho$ equal to 0.1, 0.5, and 0.9.
The spectral radii were computed from the roots of the 
polynomials \eqref{poly_sync:eq} and \eqref{poly_async:eq}.
In our setting, the optimal $\alpha$ is always 1.

In the synchronous case, as $\rho$ increases, 
the optimal value of $\beta$ increases from near 0 toward 1.

The plots for the asynchronous case are best explained
in terms of the plots for the synchronous case.
When $\beta \le 0$, $\rho(|T_{\alpha,\beta}|)$
and $\rho(T_{\alpha,\beta})$ appear to be the same.
When $\beta > 0$, it appears that
$\rho(|T_{\alpha,\beta}|) > \rho(T_{\alpha,\beta})$.
In particular, the region where the spectral radius
is less than 1 is smaller in the asynchronous case
than in the synchronous case.
The effect is that  $\rho(|T_{\alpha,\beta}|)$ is smallest for $\beta = 0$, % the asynchronous method has an optimal value for $\beta$ of 0, 
which corresponds
to the first order method.
%Here, optimal means minimizing $\rho(|T_{\alpha,\beta}|)$,
%although $\rho(|T_{\alpha,\beta}|)$ is only correctly
%used to ascertain asymptotic convergence and does not directly
%correspond to any convergence rate.

Consider $\rho = 0.5$.  For the synchronous
case, the optimal $\beta$ is approximately 0.0718.
Although the asynchronous method can converge for this
value of $\beta$, the value of 0 gives a lower value of $\rho(|T_{\alpha,\beta}|)$.
Now consider $\rho = 0.9$.  For the synchronous
case, the optimal $\beta$, {minimizing $\rho(T_{\alpha,\beta})$}, is approximately 0.3929.
{Regarding the} asynchronous method {we have that $|T_{\alpha,\beta}|$} 
has spectral radius greater than 1
for this value of $\beta$.  To guarantee convergence,
the asynchronous method must use a very small value
of $\beta$.

These results are quite negative for the asynchronous
second order method.  However, in practice, the situation
could be more favorable.  The condition $\rho(|T_{\alpha,\beta}|) < 1$
for the asynchronous method guarantees that the method
will converge for any initial vector and any specific
asynchronous iterations, i.e., any choice of
the delays, $k-s_j(k)$, and any choice of the sets $J_k$ of components to update
(satisfying the mild conditions \eqref{eq:asy_delays} and \eqref{eq:asy_updates}).  In practice, the asynchronous
method may converge despite $\rho(|T_{\alpha,\beta}|) > 1$.
One could imagine that the ``degree
of asynchrony'' affects the convergence of the asynchronous
method, and we explore this next with numerical experiments.

\section{Numerical behavior}

The asynchronous first and second order Richardson methods were implemented
in parallel using multithreading and shared memory.  
Tests were run on a dual processor Intel Xeon computer
with a total of 20 cores.  The threads were pinned to
the cores using ``scatter'' thread affinity.

The test matrix $A$ arises from the standard finite difference Laplacian matrix $\hat A$
on a $100 \times 100$ grid of unknowns. With Jacobi preconditioning, 
the preconditioned matrix
$A$ remains spd and thus satisfies \eqref{eq:spec_in_Rplus}, while $T=I-A$ is the iteration matrix which satisfies \eqref{eq:T_nonneg} and \eqref{eq:T_convergent}.
A right-hand side vector was chosen randomly with components
chosen independently from the uniform distribution on
$(-0.5, 0.5)$.  The same vector was used for all tests.
The initial vector for all iterations was zero.

Different numbers of threads were used in different tests.  Each thread was
assigned approximately the same number of unknowns to update.
The iterations performed by each thread were terminated when 
the all the unknowns were updated
an average of 500 times.  Because the threads operate asynchronously,
the number of updates performed on each unknown is generally different.
We refer to the difference between the largest number of updates and the 
smallest number of updates as the {\em range}.
When the iterations are terminated, we measure the residual norm
relative to the initial residual norm.  The residual norm
is not calculated during the iterations, as such calculations
involving dot products induce synchronization in the method.

%%%%%%%%%%%%%%%%%%%%%%%%%%%%%%%%%%%%%%%%%%%%%%%%%%%%%%%%%%%%%%%%%%%%%%%%%%%%%%%%
\subsection{First order Richardson}

For the asynchronous first order Richardson method,
Table~\ref{tab:0} shows the convergence results for tests with different numbers of
threads.  For the given matrix, the optimal $\alpha$ is 1.
For each number of threads, the method was run 100
times.  Columns 2 and 3 of the table show the average range, and the average relative
residual norm when the asynchronous iterations were terminated.
For comparison, the relative residual norm attained after 500 iterations
of the synchronous first order Richardson method is $1.691939\times 10^{-2}$.
Evidently, the convergence of the asynchronous method
is {\em better} than the convergence of the synchronous method.
This perhaps nonintuitive result is due to the fact that the asynchronous method
has a multiplicative effect \cite{jordi-ipdps18,wolfsonpou-chow-jpdc-2018}, 
i.e., unknowns are not all updated
at the same time, and when unknowns are updated, they are 
immediately available to other threads.  Indeed, for a single thread,
the asynchronous method corresponds to Gauss-Seidel, giving
a relative residual norm of $7.421009\times 10^{-3}$ which is lower than that
of the synchronous method, which corresponds to the Jacobi method.
As the number of threads is increased, convergence generally worsens slightly
as the method departs from a pure Gauss-Seidel method.
The convergence is always better than the convergence of the synchronous method
for all numbers of threads tested.

\begin{table}[htb]
\caption{Asynchronous first order Richardson for
different numbers of threads.
For comparison, the synchronous method attains an average relative residual
norm of $1.691939\times 10^{-2}$ for all numbers of threads.
Timings for the asynchronous and synchronous methods
for performing a fixed number of iterations are also given.}
\label{tab:0}
\centering
\footnotesize
\begin{tabular}{ccccc}
\hline
number of & average & average rel.&  async    & sync     \\
threads   & range   & resid. norm &  time (s) & time (s) \\
\hline
 1  &    0.0  &  $7.421009\times 10^{-03}$   &  0.060177  &  0.048345 \\
 2  &   17.1  &  $7.491060\times 10^{-03}$   &  0.034049  &  0.030291 \\
 3  &   76.1  &  $7.686441\times 10^{-03}$   &  0.022664  &  0.020642 \\
 4  &   98.3  &  $7.624358\times 10^{-03}$   &  0.018009  &  0.017360 \\
 5  &  129.6  &  $7.940683\times 10^{-03}$   &  0.015023  &  0.015171 \\
 6  &  138.1  &  $7.902309\times 10^{-03}$   &  0.012898  &  0.012751 \\
 7  &  144.6  &  $8.021550\times 10^{-03}$   &  0.011334  &  0.012374 \\
 8  &  172.2  &  $8.149458\times 10^{-03}$   &  0.010997  &  0.012067 \\
 9  &  240.4  &  $8.500669\times 10^{-03}$   &  0.010039  &  0.010737 \\
10  &  191.4  &  $8.248697\times 10^{-03}$   &  0.009339  &  0.010642 \\
11  &  222.4  &  $8.363452\times 10^{-03}$   &  0.009225  &  0.010741 \\
12  &  215.5  &  $8.311822\times 10^{-03}$   &  0.008861  &  0.010590 \\
13  &  248.9  &  $8.450671\times 10^{-03}$   &  0.009132  &  0.010339 \\
14  &  227.7  &  $8.416794\times 10^{-03}$   &  0.007867  &  0.009669 \\
15  &  253.7  &  $8.403988\times 10^{-03}$   &  0.009014  &  0.009998 \\
16  &  292.2  &  $8.610365\times 10^{-03}$   &  0.008414  &  0.009871 \\
17  &  284.6  &  $8.530868\times 10^{-03}$   &  0.008179  &  0.009668 \\
18  &  305.9  &  $8.573682\times 10^{-03}$   &  0.007307  &  0.009660 \\
19  &  288.4  &  $8.445288\times 10^{-03}$   &  0.007020  &  0.009496 \\
20  &  297.3  &  $8.448706\times 10^{-03}$   &  0.007200  &  0.009249 \\
\hline
\end{tabular}
\end{table}

The table also shows timings for the asynchronous method
and the synchronous method for different numbers of threads
(for performing a fixed number of iterations).
For small numbers of threads, the synchronous method is
faster in performing 500 iterations than the asynchronous
method in performing an average of 500 iterations by 
each thread.  This can be explained by two factors:
(1) the asynchronous method has more work to do because
each thread, after each iteration, needs to count how many 
iterations have been performed by other threads in order to 
decide whether to terminate, and (2) the asynchronous
method has more write invalidations of cache lines compared
to the synchronous method which
writes new values of $x$ to a separate array.
However, for large numbers of threads, despite these
two factors, the asynchronous method is faster (in performing 500 iterations),
due to the elimination of thread synchronization.  The overhead
of threads waiting for other threads in the synchronous
method is evidently larger when more threads are used.

%%%%%%%%%%%%%%%%%%%%%%%%%%%%%%%%%%%%%%%%%%%%%%%%%%%%%%%%%%%%%%%%%%%%%%%%%%%%%%%%
\subsection{Second order Richardson}
\label{sec:result2}

\begin{table}[tbp]
\caption{Asynchronous second order Richardson for
different numbers of threads.
The parameter values $\alpha=1$ and $\beta \approx 0.93968$ that were used
are optimal for {\em synchronous} iterations.
For comparison, the synchronous method attains an average relative residual
norm of $1.258388\times 10^{-7}$ for all numbers of threads.
Timings for the asynchronous and synchronous methods are also given.}
\label{tab:1}
\centering
\footnotesize
\begin{tabular}{cccccc}
\hline
number of & average & average rel.& number of &  async    & sync     \\
threads   & range   & resid. norm & failures  &  time (s) & time (s) \\
\hline
 1  &    0.0  &  $1.258388\times 10^{-7}$  &   0  &  0.053275  &  0.052961 \\
 2  &   40.8  &  $4.235170\times 10^{-7}$  &   0  &  0.031146  &  0.032542 \\
 3  &  104.3  &  $6.175605\times 10^{-6}$  &   0  &  0.019592  &  0.023368 \\
 4  &  115.7  &  $1.444428\times 10^{-5}$  &   0  &  0.016493  &  0.018801 \\
 5  &  166.0  &  $1.495107\times 10^{-4}$  &   0  &  0.013533  &  0.017519 \\
 6  &  163.0  &  $4.524130\times 10^{-4}$  &   0  &  0.011563  &  0.014606 \\
 7  &  200.1  &  $1.868556\times 10^{-3}$  &   0  &  0.010649  &  0.013078 \\
 8  &  151.5  &  $9.259216\times 10^{-3}$  &   0  &  0.009794  &  0.012843 \\
 9  &  246.0  &  $4.035731\times 10^{-2}$  &   1  &  0.008917  &  0.012560 \\
10  &  203.2  &  $1.088207\times 10^{-1}$  &   1  &  0.009000  &  0.012371 \\
11  &  209.4  &  $4.582844\times 10^{-1}$  &  21  &  0.008972  &  0.011905 \\
12  &  185.5  &  $1.678645\times 10^{+0}$  &  25  &  0.008397  &  0.011527 \\
13  &  227.6  &  $1.046313\times 10^{+1}$  &  32  &  0.008216  &  0.011698 \\
14  &  205.9  &  $3.971405\times 10^{+1}$  &  43  &  0.007081  &  0.010863 \\
15  &  239.3  &  $5.207066\times 10^{+2}$  &  35  &  0.007568  &  0.010828 \\
16  &  166.8  &  $2.317140\times 10^{+2}$  &  24  &  0.007101  &  0.011470 \\
17  &  226.3  &  $3.303636\times 10^{+1}$  &  22  &  0.006217  &  0.011161 \\
18  &  191.8  &  $6.415417\times 10^{+1}$  &  30  &  0.005972  &  0.010969 \\
19  &  237.6  &  $2.377968\times 10^{+1}$  &  23  &  0.006237  &  0.011147 \\
20  &  173.8  &  $3.136173\times 10^{+1}$  &  46  &  0.006614  &  0.011012 \\
\hline
\end{tabular}
\end{table}

\begin{table}[tbp]
\caption{Asynchronous second order Richardson for
different numbers of threads.
Parameter values: $\alpha=1$ and $\beta=0.9$.}
\label{tab:2}
\centering
\footnotesize
\begin{tabular}{ccccc}
\hline
number of & average & average rel.& number of &  time \\
threads   & range   & resid. norm & failures  & (sec.)\\
\hline
 1  &    0.0  &  $9.566179\times 10^{-5}$   &  0  &  0.053059 \\
 2  &   47.7  &  $1.032052\times 10^{-4}$   &  0  &  0.030998 \\
 3  &  105.8  &  $1.802432\times 10^{-4}$   &  0  &  0.019752 \\
 4  &  122.3  &  $1.499666\times 10^{-4}$   &  0  &  0.016426 \\
 5  &  148.3  &  $2.081259\times 10^{-4}$   &  0  &  0.013676 \\
 6  &  154.7  &  $2.091337\times 10^{-4}$   &  0  &  0.011510 \\
 7  &  208.8  &  $2.745261\times 10^{-4}$   &  0  &  0.010352 \\
 8  &  182.9  &  $2.802124\times 10^{-4}$   &  0  &  0.010104 \\
 9  &  230.9  &  $3.434991\times 10^{-4}$   &  0  &  0.009003 \\
10  &  190.7  &  $2.701899\times 10^{-4}$   &  0  &  0.008824 \\
11  &  185.7  &  $3.500390\times 10^{-4}$   &  0  &  0.008086 \\
12  &  154.8  &  $3.445788\times 10^{-4}$   &  0  &  0.008059 \\
13  &  198.9  &  $6.526787\times 10^{-4}$   &  0  &  0.008342 \\
14  &  219.4  &  $2.479312\times 10^{-3}$   &  0  &  0.007052 \\
15  &  212.1  &  $8.821667\times 10^{-3}$   &  0  &  0.008112 \\
16  &  158.8  &  $2.594421\times 10^{-3}$   &  0  &  0.006902 \\
17  &  227.1  &  $1.113219\times 10^{-3}$   &  0  &  0.006715 \\
18  &  191.0  &  $6.389028\times 10^{-3}$   &  0  &  0.006050 \\
19  &  227.5  &  $1.464582\times 10^{-3}$   &  0  &  0.006365 \\
20  &  173.2  &  $4.955854\times 10^{-3}$   &  0  &  0.006487 \\
\hline
\end{tabular}
\end{table}

For the asynchronous second order Richardson method,
Table~\ref{tab:1} shows the convergence results for different numbers of
threads using the values $\alpha=1$ and $\beta \approx 0.93968$ which are
optimal for the synchronous method.  For these values, the asynchronous
method is not guaranteed to converge.
For each number of threads, the method was run 100
times.  The table shows the average range, the average relative
residual norm, and the number of failures, which is the number
of times the relative residual norm is greater than unity in the
100 runs.  

When a single thread is used, the asynchronous method is 
mathematically identical to the synchronous method.
When a small number of threads was used, the asynchronous method
always converged in the 100 runs, with a degradation in the
``convergence rate'' as the number of threads is increased.
What we mean here with convergence rate is how small is the residual
when the termination criterion is satisfied.
When a larger number of threads was used, the number of 
failures of the asynchronous method generally increases.  This is
due to an increased degree of asynchrony, which is somewhat
reflected by the increasing average range.

The table also shows timings for the asynchronous and 
synchronous second order Richardson methods.
The asynchronous method is faster (when performing a fixed
number of iterations) when more than 1 thread
is used, and the difference is generally larger when
more threads are used.

To attempt to make the asynchronous method more robust,
we test using a smaller value of $\beta$.
This is analogous to underestimating the bounds of the spectrum in the inexact
Chebyshev method \cite{golub-overton-1988}.
Table~\ref{tab:2} shows the convergence results using
$\alpha=1$ and $\beta = 0.9$.  With this value of $\beta$,
the asynchronous method is still not guaranteed to converge,
but it can be observed that convergence is always obtained
in the 100 runs for each number of threads.
However, the convergence rate is degraded for this choice
of $\beta$, i.e., compared to Table~\ref{tab:1} when 
a small number of threads is used.

%%%%%%%%%%%%%%%%%%%%%%%%%%%%%%%%%%%%%%%%%%%%%%%%%%%%%%%%%%%%%%%%%%%%%%%%%%%%%%%%
\subsection{Synchronous and asynchronous convergence timings}

In the previous subsections, we compared the timings of asynchronous and 
synchronous iterations for a fixed number of iterations.  In this subsection,
we compare the residual norms that are achieved in parallel implementations
of the synchronous and asynchronous methods as a function of time.

In these tests, we used ten threads on a single Intel Xeon processor
with 10 cores, with each thread pinned to one of the two hyperthreads on
each core.  The test matrix is again from the standard finite difference
Laplacian matrix, but now on a $300 \times 300$ grid of unknowns.

The $90,000$ unknowns were partitioned into 10 partitions and each thread
was assigned to update the unknowns in one partition.  Two types of
partitionings were used: {\em balanced}, where each partition contains
9000 unknowns; and {\em unbalanced}, where 5 partitions contain 6000
unknowns and 5 partitions contain 12000 unknowns.

Figure \ref{fig:1st_timings} shows the results for the first order
Richardson method (using the optimal $\alpha=1$).
This figure was generated by running the parallel method for a
fixed number of iterations, $t$, in the synchronous case, or
when all threads have executed an average of $t$ iterations
in the asynchronous case.  The relative residual norm was then
computed.  For a given value of $t$, 20 tests were performed
and the average relative residual norm was computed.
These averages for different values of $t$ are plotted
in the figure, where the $x$-axis is the average execution time 
for tests with a given $t$.  The variations in residual norms 
and timings for a given $t$ are very small, and practically
indiscernible from the averages if they were plotted.

Figure \ref{fig:1st_timings} shows that, for first order Richardson for
the given test system, the asynchronous method is faster than the synchronous
method.  The unbalanced case is significantly slower than the balanced case
for the synchronous iteration, whereas for the asynchronous iteration there
is only a minor difference between the balanced and the unbalanced case.

\begin{figure}[h]
\centering
\parbox{0.45\textwidth}{
 \includegraphics[width = .45\textwidth]{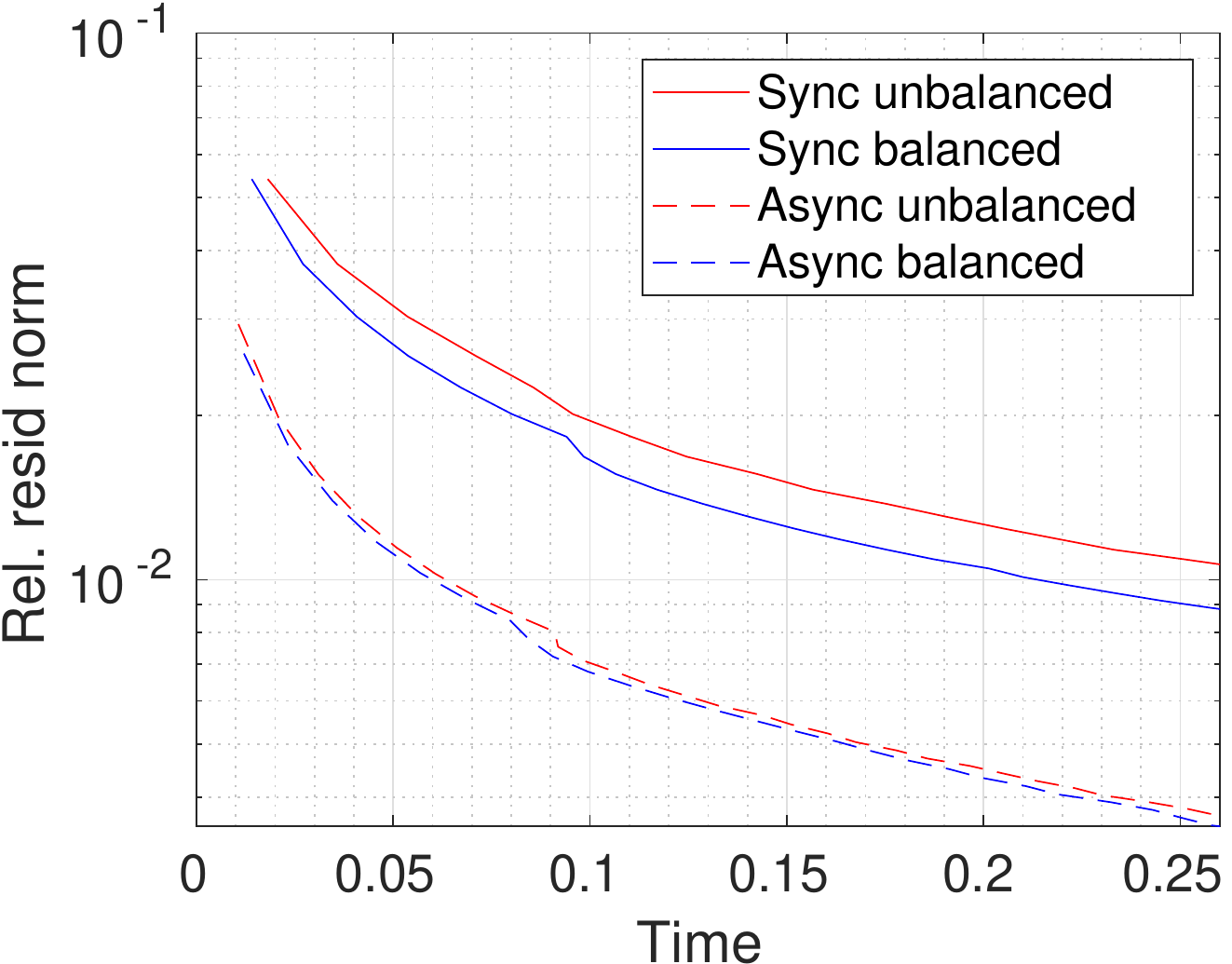}
 \caption{First order Richardson convergence with time.}
 \label{fig:1st_timings}}
\qquad
\parbox{0.45\textwidth}{
 \includegraphics[width = .45\textwidth]{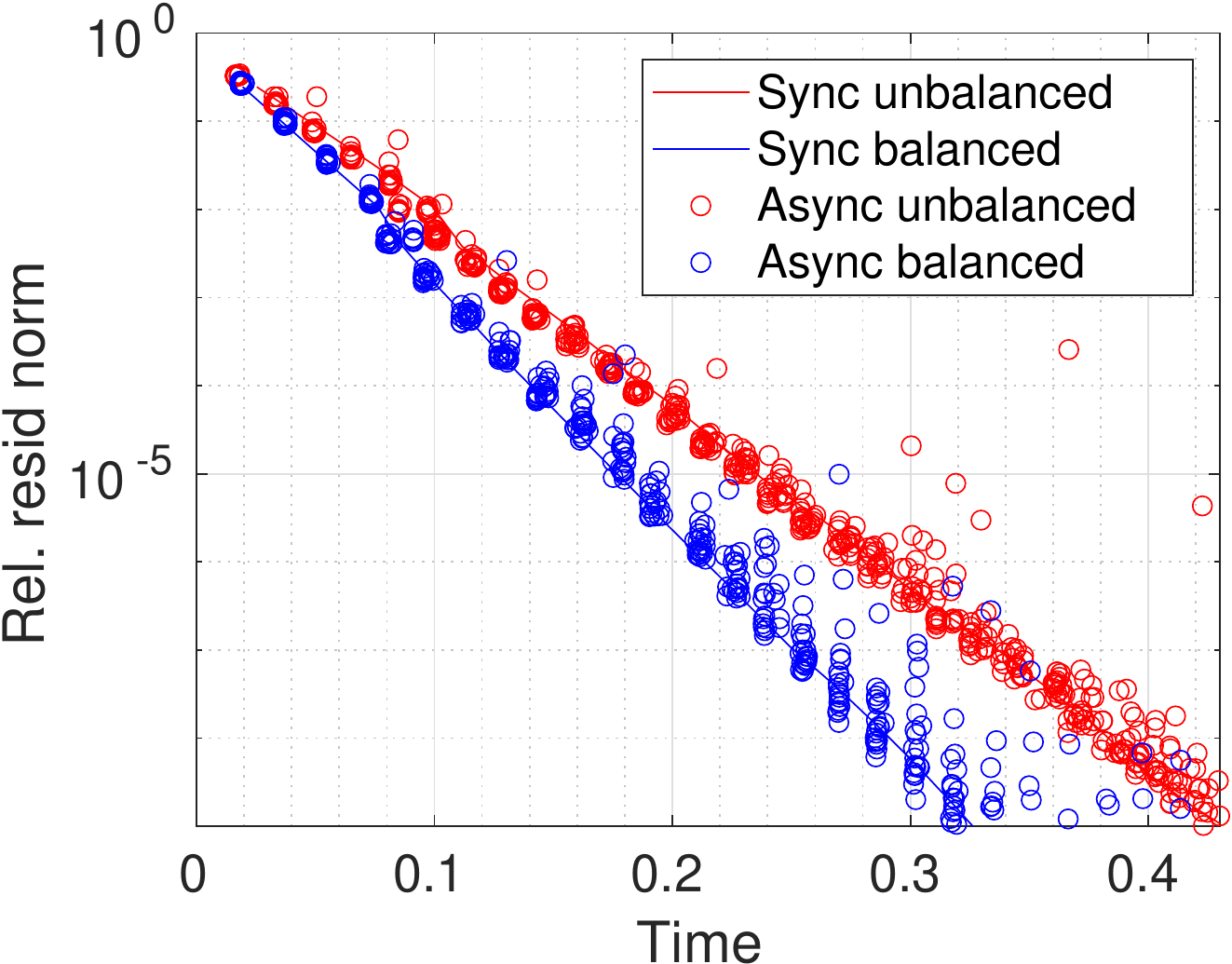}
 \caption{Second order Richardson convergence with time.}
 \label{fig:2nd_timings}}
\end{figure}

Figure \ref{fig:2nd_timings} shows the results for the second order
Richardson method, using the optimal values of $\alpha$ and $\beta$.
This figure was generated in the same way as the previous figure,
but here, the result of each of the 20 tests is plotted individually in the asynchronous
case, since the variations in the results are now much larger.
For the synchronous iteration (solid lines), the unbalanced case
is slower than the balanced case, as expected.
The asynchronous iteration (circles) is sometimes faster and
sometimes slower than the synchronous iteration (circles above and below
the solid lines of the same color).  We also observe that
the asynchronous method is substantially slower when the partitions
are unbalanced, compared to when they are balanced.
This is in contrast to the observation for first order Richardson,
which was not as sensitive to imbalance.
As we had observed in Section \ref{sec:result2}, an increased
degree of asynchrony is detrimental to the convergence of the
second order Richardson method, and here it is the load imbalance 
that increases the degree of asynchrony.

Comparing the asynchronous first and second order Richardson methods
for the given test problem with the use of optimal parameter values, the
second order method can converge much faster than the first
order method.  Convergence can be reliable although
it is not guaranteed.

\section{Conclusion}

Except to say whether or not an asynchronous iterative method will
converge in the asymptotic limit, the convergence behavior of these
methods is strongly problem-dependent and computer platform-dependent
and not well covered by theory.
For the first and second order Richardson methods, in the setting where
$\rho(T) < 1$, $T \ge 0$, and $\spec(A) \subset \mathR^+$, this paper provides a
description of the parameter values for which the asynchronous versions
of these methods are guaranteed to converge.  Numerically, however, we
find that this theoretical description can give a pessimistic view of
asynchronous iterative methods.  For a standard test problem, a multithreaded
parallel implementation of asynchronous iterations can converge reliably
in cases where it is theoretically possible for such iterations
to diverge.  How likely divergence will occur depends on the degree
of asynchrony in the computation, which is difficult to quantify.
A possible theoretical approach is to analyze asynchronous
iterative methods as randomized algorithms \cite{avron-jacm-2015}.

Asynchronous execution of the first order Richardson method
can clearly give much lower time-to-solution than synchronous execution.
Asynchronous execution of the second order Richardson method
may be slightly faster than synchronous execution because each iteration 
performed by a thread is executed more rapidly.  On the other
hand, execution may be slower because asynchrony is detrimental to
the convergence of the method.
The second order method, with its use of not one but two previous iterates,
appears to require much tighter coupling between the threads that
are working in parallel.

\section*{Acknowledgments} Work on this paper commenced while the three
authors were attending a workshop at the Centre International de
Rencontres Math\'ematiques, Luminy, France in September 2019.
The center's support for such an event is greatly appreciated.
Work of the first and third authors was supported in part by
the U.S.\ Department of Energy under grants DE-SC-0016564 and DE-SC-0016578.

\bibliographystyle{plain}
\bibliography{Richardson}

\end{document}